\newtheorem{theorem}{Theorem}[section]
\newtheorem{fact}[theorem]{Fact}
\newtheorem{prop}[theorem]{Proposition}
\newtheorem{claim}[theorem]{Claim}
\newtheorem{definition}[theorem]{Definition}
\newtheorem{lemma}[theorem]{Lemma}
\newtheorem{cor}[theorem]{Corollary}
\newcommand{\vectornorm}[1]{\left| \! \left|#1\right| \! \right|}
\newcommand{\rapf}{$\RA$:\ }
\newcommand{\lapf}{$\LA$:\ }
\newcommand{\lland}{\ \land \ }
\newcommand{\eps}{\epsilon}
\newcommand{\NN}{{\mathbb{N}}}
\newcommand{\RR}{{\mathbb{R}}}
\newcommand{\sub}{\subseteq}
\newcommand{\sN}[1]{_{#1\in \NN}}
\newcommand{\uhr}[1]{\! \upharpoonright_{#1}}
\newcommand{\ML}{Martin-L{\"o}f}
\newcommand{\SI}[1]{\Sigma^0_{#1}}
\newcommand{\bi}{\begin{itemize}}
\newcommand{\ei}{\end{itemize}}
\newcommand{\bc}{\begin{center}}
\newcommand{\ec}{\end{center}}
\newcommand{\ria}{\rightarrow}
\newcommand{\tp}[1]{2^{#1}}
\newcommand{\ex}{\exists}
\newcommand{\seqcantor}{2^{ \omega}}
\newcommand{\strcantor}{2^{< \omega}}
\newcommand{\cantor}{\seqcantor}
\newcommand{\Opcl}[1]{[#1]^\prec}
\newcommand{\Om}{\Omega}
\newcommand{\n}{\noindent}
\newcommand{\vsps}{\vspace{3pt}}
\newcommand{\vsp}{\vspace{6pt}}
\newcommand{\leb}{\mathbf{\lambda}}
\newcommand{\sss}{\sigma}
\newcommand{\aaa}{\alpha}
\newcommand\+[1]{\mathcal{#1}}
\newcommand{\R}{{\mathbb R}}
\newcommand{\LR}{\Leftrightarrow}
\newcommand{\RA}{\Rightarrow}
\newcommand{\LA}{\Leftarrow}
\newcommand{\ol}{\overline}
\newcommand{\ul}{\underline}
\newcommand{\Fcn}{\mbox{\rm \textsf{cdf}}}
\newcommand{\DA}{\downarrow}
\newcommand{\UM}{\mathbb{U}}
\newcommand{\sssl}{\ensuremath{|\sigma|}}
\def\Hawaii{Hawai\kern.05em`\kern.05em\relax i}
\begin{document}

\title{Algorithmic  aspects of Lipschitz functions}
\author[
Freer, Kjos-Hanssen, Nies and Stephan]
{Cameron Freer, Bj\o rn Kjos-Hanssen, Andr\'e Nies and Frank Stephan}
\address{C.~Freer, Computer Science and Artificial Intelligence Laboratory,
Massachusetts Institute of Technology,
32 Vassar Street,
Cambridge, MA 02139 }
\email{freer@math.mit.edu}
\address{B.~Kjos-Hanssen, Department of Mathematics, University of \Hawaii\ at M\=anoa, 2565 McCarthy Mall, Honolulu, HI 96822 }
\email{bjoern@math.hawaii.edu}
\address{A.~Nies, Department of Computer Science, University of Auckland,
 Private Bag 92019, Auckland, New Zealand}
\email{andre@cs.auckland.ac.nz}
\address{F.~Stephan, Department of Mathematics, National University of Singapore, Singapore 119076, Republic of Singapore}
\email{fstephan@comp.nus.edu.sg}

	\begin{abstract}
		We characterize the variation functions of com\-pu\-ta\-ble Lipschitz functions. We show that a real $z$ is computably random if and only if every
		computable Lipschitz function is differentiable at $z$. Beyond these principal results, we show that a real $z$ is Schnorr random if and only if every
		Lipschitz function with $L_1$-computable derivative is differentiable at $z$.
	\end{abstract}
	\keywords{Lipschitz functions, Computability}
	
\maketitle
	
	\section{Introduction}
		Lipschitz functions are of fundamental importance in analysis. They appear naturally in various contexts, such as the solvability of differential equations.
		For a set $A \sub \RR^n$, recall that a function $f\colon A \to \RR^m$ is \emph{Lipschitz}
		if there is a constant~$c$, called a Lipschitz bound, such that for all $x$ and $y$ we have
		$\vectornorm {f(x)-f(y)} \le c \vectornorm{x-y}$ (say, $\vectornorm {\cdot} $ denotes the Euclidean norm).
		There are many theorems stating that Lipschitz functions are in one sense or another well-behaved.
		For instance, the McShane--Whitney extension theorem says that
		a Lipschitz function $f\colon A \to \RR^m$ can be extended to a Lipschitz function, with the same least Lipschitz bound, that is defined on all of $\RR^n$.
		Rademacher's theorem states that $f$ is differentiable at almost every point in $A$.
		In dimensions $n=m=1$ this is immediate from the well-known theorem of Lebesgue that every real function of bounded variation is differentiable almost everywhere.
		In higher dimensions one uses arguments particular to Lipschitz functions.
		See \cite{Heinonen:04} and the references given there for more background on Lipschitz functions.

		Computable analysis seeks algorithmic analogues of theorems from analysis when effectiveness conditions are imposed on the functions.
		Several important theorems from analysis state almost everywhere well-behavior of functions in certain classes.
		In recent years, such theorems have been studied intensely from the point of view of algorithmic randomness~\cite{Pathak:09,Brattka.Miller.ea:nd,Pathak.Rojas.ea:12,Rute:12};
		this provides a way of understanding the complexity of exception null sets, and to characterize algorithmic randomness notions via computable analysis.
		Much earlier, the constructivist Demuth already observed this connection in papers such as \cite{Demuth:75}.
		Our purpose is to carry out some of this program in the setting of Lipschitz analysis.

		We briefly discuss some basic concepts in computable analysis.
		A sequence $(q_n)\sN n$ of rationals is called a \emph{Cauchy name} if $|q_{n} - q_{k} | \le \tp{-n}$ for each $k \ge n$.
		If $\lim_n q_n =x$ we say that $(q_n)\sN n$ is a \emph{Cauchy name for} $x$. Thus, $q_n$ approximates~$x$ up to an error $|x - q_n | $ of at most $ \tp{-n}$.
		A real $x$ is called \emph{computable} if it has a computable Cauchy name.

		In Subsection~\ref{ss:compfunctions} we will in detail discuss computability of functions defined on the unit interval.
		For now, it suffices to know that a Lipschitz function~$f$ is computable if and only if
		$f(q)$ is a computable real uniformly in a rational $q \in [0,1]$.
		This is not the original definition of computability for functions defined on $[0,1]$; see the discussion after Definition \ref{CompDef}.
		The same condition defines computability of a continuous monotonic function.

		A real $x$ is called \emph{left-r.e.} (short for left-recursively enumerable) if the set of rationals less than $x$ is recursively enumerable.
		Equivalently, there is an increasing computable sequence of rationals $(q_n)\sN n$ such that $x = \sup_n q_n$.

		In the following subsections we will give an overview of our main results.
		Possible extensions and open questions will be discussed in the concluding section.

		\subsection{The variation of a computable Lipschitz function}
			Let $g\colon [0,1] \ria \R$. For $0 \le x< y \le 1$ recall the \emph{variation} of $g$ in $[x,y]$:
			\[
				V(g,[x,y]) = \sup \left\{\sum_{i=1}^{n-1} \bigl| g(t_{i+1}) - g(t_i)\bigr| : x \le t_1 \le t_2 \le \ldots \le t_n \le y\right\}.
			\]
			We have $V(g,[x,y]) + V(g, [y,z]) = V (g, [x,z])$ for $x< y< z$ (see \cite[Prop.\ 5.2.2]{Bogachev.vol1:07}).
			Note that if $g$ is (uniformly) continuous,
			then we may restrict the sequences $t_1 \le t_2 \le \ldots \le t_n$ above to a dense subset of $[0,1]$, such as the dyadic rationals.
			We write $V_g$ for the function $x \mapsto V(g, [0,x])$. Each Lipschitz function is of bounded variation.
			In fact, it is easy to see that $g$ is Lipschitz iff $V_g$ is, and they have the same least Lipschitz constant.

			We will provide a characterization of the class of
			variation functions~$V_g$ for computable Lipschitz functions~$g$.
			If $v$ is any non-decreasing function with $v(0)=0$ then $v=V_v$, so classically,
			the functions~$V_g$ for Lipschitz functions~$g$ are simply the nondecreasing Lipschitz functions starting at the origin.
			In an effective setting, this simple correlation breaks down, because the variation function of a computable Lipschitz function is not necessarily computable.
			In fact, even the total variation $V_g(1)$ of a computable Lipschitz functions $g$ defined on $[0,1]$ need not be
			computable as a real. Note that $V_g(1)$ is always left-r.e. Conversely, we will show in Fact~\ref{fact:easiest} that
			every left-r.e.\ real in $[0,1]$ is of the form $V_g(1)$ for some computable function $g$ with Lipschitz constant~$1$.

			If~$g$ is a computable Lipschitz function and $f=V_g$, then
			$f$ is non-decreasing Lipschitz, we have $f(0)= 0$,
			and $f(y)- f(x)$ is left-r.e.\ uniformly in rationals $x< y$. We call nondecreasing functions~$f$
			satisfying the last two conditions \emph{interval-r.e.} Our first main result, Theorem~\ref{thm:left-r.e._Lipschitz_variation}, shows that this weak
			effectiveness condition on $f$ is sufficient:
				\emph{every Lipschitz interval-r.e.\ function~$f$ is of the form $V_g$ for some computable Lipschitz function $g$}.
		 
			Our proof relies on the following notion. A \emph{signed martingale} is a function $2^{< \omega} \ria \R$ such that the fairness condition
			$ M(\sss 0) + M(\sss 1) = 2 M(\sss)$
			holds for each string $\sss$.
			We proceed via the fact, of interest in itself, that every left-r.e.\ positive martingale with a non-atomic associated measure on Cantor space
			(see Subsection~\ref{ss:martingales}) is the variation martingale of a signed computable martingale.
			The definition of the variation martingale corresponds to the variation measure $|\mu|$ of a signed measure $\mu$.
			Recall that $|\mu | (E)$ is the supremum over all $\sum_i |\mu(E_i)|$ where $(E_i) \sN i$ ranges over partitions of $E$ into measurable sets.
			See, for instance, \cite[Section 6.1]{Rudin:87}.

			After seeing our Theorem~\ref{thm:left-r.e._Lipschitz_variation},
			Jason Rute has provided an extension of our construction to all continuous interval-r.e.\ functions $f$, by showing that $f=V_g$ for some computable~$g$.
			We include this as a theorem joint with Rute at the end of Section~\ref{s:LipVar}.

			One can also ask whether a similar result holds in the context of effective measure theory developed by G\'acs, Hoyrup, Rojas and others (see~\cite{Hoyrup.Rojas:09}).
			For instance, is every lower semi-computable measure on $[0,1]^n$ without point masses the variation,
			in the sense of \cite[Section 6.1]{Rudin:87}, of a computable signed measure?
			Rute has pointed out that our Theorem~\ref{thm:left-r.e._Lipschitz_variation} implies an affirmative answer in the case $n=1$,
			and that the hypothesis to have no point masses is necessary.

		\subsection{Lipschitz functions and computable randomness}
			The following is due to Brattka, Miller, and Nies \cite[Thm.\ 4.1]{Brattka.Miller.ea:nd}:

			\begin{theorem}\label{thm:Brattka}
				Let $z\in [0,1]$. Then
				$z$ is computably random $\LR$ 
				
				\hfill
				each computable nondecreasing function
				$g \colon \, [0,1] \ria \mathbb R$ is differentiable at~$z$.
			\end{theorem}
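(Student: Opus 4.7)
The plan is to prove both directions through the standard dictionary between computable non-decreasing functions $g:[0,1]\to\R$ with $g(0)=0$ and computable non-negative martingales on $\cantor$: set $M_g(\sigma)=2^{|\sigma|}\bigl(g(0.\sigma+2^{-|\sigma|})-g(0.\sigma)\bigr)$. Then $M_g$ is a computable non-negative martingale, and every such martingale $M$ arises in this way from a computable non-decreasing $g$ defined by $g(x)=\mu_M([0,x])$, where $\mu_M$ is the measure on $[0,1]$ determined by $M$ on dyadic cylinders. Under this dictionary, the dyadic difference quotients of $g$ at a non-dyadic point $z$ coincide with the values $M_g(z\uhr n)$.

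For the direction ($\LA$), I argue by contrapositive. If $z$ is not computably random, there is a computable non-negative martingale $M$ with $\limsup_n M(z\uhr n)=\infty$. The associated computable non-decreasing $g$ then has unbounded dyadic difference quotients at $z$, so $g'(z)$ cannot exist.

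For the direction ($\RA$), assume $z$ is computably random and fix a computable non-decreasing $g$. Since $z$ is computably random, $M_g$ does not succeed on $z$, so the values $M_g(z\uhr n)$ are bounded. A sandwich argument---each interval $[z-h,z+h]$ lies between unions of a bounded number of dyadic intervals of comparable length, and $g$ is monotone---upgrades this to boundedness of all difference quotients of $g$ at $z$. It remains to rule out oscillation. Suppose for contradiction that there are rationals $0\le p<q$ with
\[
\liminf_{h\to 0}\frac{g(z+h)-g(z)}{h}<p<q<\limsup_{h\to 0}\frac{g(z+h)-g(z)}{h}.
\]
The sandwich observation again forces (after shrinking $(p,q)$ slightly if needed) that the dyadic readings $M_g(z\uhr n)$ themselves upcross some rational interval $[p',q']\subseteq(p,q)$ infinitely often. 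I then build, uniformly in $(p,q)$, a computable martingale $N_{p,q}$ implementing a Doob upcrossing strategy on $M_g$: $N_{p,q}$ keeps its capital growing in lock-step with $M_g$ while ``betting up'' after a reading below $p'$, locks in a factor $q'/p'$ after each reading above $q'$, and restarts. Each upcrossing along $z$ then yields a multiplicative gain, so $N_{p,q}$ succeeds on $z$. A convex combination $N=\sum_{p<q\in\QQ}2^{-c(p,q)}N_{p,q}$ over rational pairs is a single computable martingale succeeding on $z$, contradicting computable randomness.

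The main obstacle is the upcrossing construction: $N_{p,q}$ must be a \emph{genuine} martingale on $\cantor$ (satisfying $N_{p,q}(\sigma0)+N_{p,q}(\sigma1)=2N_{p,q}(\sigma)$ at every node, not merely along $z$'s path), while its betting rule only sees $M_g$-values on prefixes of the current node. The right idea is to let $N_{p,q}(\sigma)$ be a carefully chosen function of $M_g(\sigma)$ and the upcrossing count of $M_g$-values along $\sigma$, exploiting the fact that $M_g$ is itself a martingale so that ``betting proportionally to $M_g$'' is consistent; the classical Doob argument then delivers the required multiplicative gain per upcrossing essentially for free.
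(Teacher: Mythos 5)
The theorem you are proving is not proved in the paper at all: it is stated as Theorem~\ref{thm:Brattka} and attributed to Brattka, Miller, and Nies \cite[Thm.\ 4.1]{Brattka.Miller.ea:nd}. The paper's own contribution is the Lipschitz analogue, Theorem~\ref{thm:comprd_Lipschitz}, whose proof of the direction \lapf builds a \emph{bounded} computable martingale $B$, with values in $[1,4]$, that \emph{oscillates} along $Z$, and then takes $f=\Fcn(B)$. Boundedness is what makes $\Fcn(B)$ continuous (indeed Lipschitz) and hence computable; since $B\ge 0$, it is also nondecreasing, so that argument already yields Theorem~\ref{thm:Brattka}~(\lapf). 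Your proposal instead takes $g=\Fcn(M)$ directly from an arbitrary computable martingale $M$ that succeeds on $Z$, and this does not work: if $\mu_M$ has an atom (which it easily can, e.g.\ the martingale that bets everything on $0$ at each step), $\Fcn(M)$ has a jump discontinuity and is not effectively uniformly continuous, hence not computable in the sense of Definition~\ref{CompDef}. So your claim that ``every such martingale $M$ arises in this way from a computable non-decreasing $g$'' is false, and the direction \lapf needs exactly the oscillation trick the paper uses.

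The direction \rapf has a more serious gap. Your sandwich argument correctly bounds all difference quotients of $g$ at $z$ from a bound on $M_g(Z\uhr n)$, but the subsequent claim that non-differentiability of $g$ at $z$ forces the \emph{dyadic} readings $M_g(Z\uhr n)$ themselves to upcross some rational interval infinitely often is wrong. The paper points out the counterexample in Section~5, item~1: the nondecreasing Lipschitz function $f_0(x)=x\sin(2\pi\log_2|x|)+10x$ has \emph{all} dyadic slopes at $0$ equal to $10$, yet $\ul D f_0(0)=9<11=\ol D f_0(0)$. A dyadic interval containing $[z,z+h]$ has length comparable to $h$, but it is a union of basic dyadic cubes not all of which contain $z$, so the martingale $M_g$ evaluated along $Z$ alone cannot see those slopes, and your shrinking of $(p,q)$ does not repair this. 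The Doob upcrossing martingale $N_{p,q}$ you sketch is a reasonable way to establish that $M_g(Z\uhr n)$ \emph{converges} when $z$ is computably random (and you are right that it can be made a genuine computable martingale, since the active bets are just $M_g$'s own bet proportions), but convergence of the dyadic slopes is not the same as differentiability. The argument of \cite{Brattka.Miller.ea:nd} has to control the slopes at non-dyadic scales by additional martingales or tests; that missing step is the crux of the forward direction, and your proposal does not supply it.
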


			\vsps

			\n
			Our second main result, Theorem~\ref{thm:comprd_Lipschitz}, provides  an analogous fact for computable Lipschitz functions.

		\subsection{Lebesgue points and Schnorr randomness}
			Let $L_1([0,1]^n)$ denote the set of integrable functions $g\colon [0,1]^n\ria \R$.
			Recall that a vector $z \in [0,1]^n$ is called a \emph{Lebesgue point} of such a function $g$ if
			\[
				\lim_{z \in Q \lland \leb Q \to 0} (\leb Q)^{-1} \int_Q |g - g(z)| = 0,
			\]
			where $Q$ ranges over $n$-cubes. We say that $z$ is a \emph{weak Lebesgue point} of $g$ if the limit
			\[
				\lim_{z \in Q \lland \leb Q \to 0} (\leb Q)^{-1} \int_Q g
			\]
			exists. The Lebesgue differentiation theorem states the following.
			\begin{theorem} \label{Thm:LebesgueDiff}
				Let $g \in L_1([0,1]^n)$. Then almost every point in $[0,1]^n$ is a Lebesgue point of~$g$.
			\end{theorem}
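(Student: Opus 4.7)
The plan is to follow the classical route through the Hardy--Littlewood maximal function. First I introduce the maximal operator
\[
	Mg(z) = \sup_{z \in Q} \frac{1}{\lambda(Q)} \int_Q |g|,
\]
where $Q$ ranges over $n$-cubes contained in $[0,1]^n$ that contain $z$. Next I define the oscillation
\[
	\Omega g(z) = \limsup_{z\in Q\lland \lambda Q\to 0} \frac{1}{\lambda(Q)} \int_Q |g - g(z)|,
\]
so that the points where the Lebesgue point property fails are exactly $\{z : \Omega g(z)>0\} = \bigcup_{k} \{z : \Omega g(z) > 1/k\}$. It suffices to show $\lambda\{\Omega g > \alpha\} = 0$ for every $\alpha>0$.

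The engine is the weak-type $(1,1)$ maximal inequality
\[
	\lambda\{z : Mg(z) > \alpha\} \le \frac{C_n}{\alpha} \, \|g\|_{L_1},
\]
which I would prove by a Vitali-style covering argument: from any finite collection of cubes witnessing $Mg>\alpha$, one extracts a pairwise disjoint subcollection whose dilates by factor $3$ cover the union, giving the constant $C_n = 3^n$. I then exploit density of continuous functions in $L_1([0,1]^n)$: given $\varepsilon>0$, decompose $g = h + \phi$ with $h$ continuous on $[0,1]^n$ and $\|\phi\|_{L_1} < \varepsilon$. For the continuous summand, uniform continuity gives $\Omega h \equiv 0$ everywhere. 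Hence $\Omega g(z) \le \Omega \phi(z) \le M\phi(z) + |\phi(z)|$ pointwise.

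Using the maximal inequality and the Markov inequality applied to $|\phi|$, I obtain
\[
	\lambda\{\Omega g > \alpha\} \le \lambda\{M\phi > \alpha/2\} + \lambda\{|\phi| > \alpha/2\} \le \frac{2(C_n+1)}{\alpha}\, \varepsilon.
\]
Since $\varepsilon>0$ was arbitrary, $\lambda\{\Omega g > \alpha\} = 0$, and letting $\alpha\to 0$ through $1/k$ completes the proof. The main obstacle is the covering lemma underlying the maximal inequality; everything else is a soft density/approximation argument. I would also remark that in the one-dimensional case one can alternatively derive the result from Lebesgue's differentiation theorem for monotone functions by decomposing the indefinite integral into its monotone components, a viewpoint that connects naturally with the variation-theoretic themes of the paper.
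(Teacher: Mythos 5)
The paper does not give its own proof of this theorem; it simply cites Rudin \cite[Thm.\ 7.7]{Rudin:87}, whose argument is exactly the Hardy--Littlewood maximal function route you take here. Your proposal is correct and is in essence a faithful sketch of that standard proof (weak-type $(1,1)$ bound via a Vitali-type covering lemma, then density of continuous functions in $L_1$ and the splitting $g=h+\phi$), so there is nothing to compare beyond noting the agreement.
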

			For a proof see for instance Rudin \cite[Thm.\ 7.7]{Rudin:87}.
			One can replace the cubes in the definition of Lebesgue points by other geometric objects, such as balls centered at $z$.
			Rudin \cite[Thm.\ 7.10]{Rudin:87} has given a general definition of
			a sequence of Borel sets $(E_k)\sN k$ ``shrinking nicely'' to $z$ that makes the theorem hold; this encompasses both cubes and balls.

			We will formulate the results related to Schnorr randomness (see Subsection~\ref{ss:Schnorr} for a definition) first in terms of weak Lebesgue points.
			In dimension~$1$ they will later on be translated to results on differentiability of Lipschitz functions that are effective in a strong sense.
			\begin{theorem}[Pathak, Rojas, and Simpson \cite{Pathak.Rojas.ea:12} and Rute \cite{Rute:12}]\label{Pathak.Rojas.eaR}
				Let $z\in [0,1]^n$. Then
				$z$ is Schnorr random $\LR$ 
				$z$ is a weak Lebesgue point of every $L_1$-computable function.
			\end{theorem}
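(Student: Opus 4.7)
The plan is to treat the two directions separately, by effectivizing the standard analytic toolkit: the Hardy--Littlewood maximal inequality for the direction from Schnorr randomness to the weak Lebesgue property, and a direct construction from a Schnorr test for the converse.

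For the forward direction, fix an $L_1$-computable $g$ with computable rational step-function approximants $g_k$ satisfying $\|g-g_k\|_1\le 2^{-k}$. Each $g_k$ has the weak Lebesgue property at all but a null set of points, so the obstruction to $z$ being a weak Lebesgue point of $g$ is governed by the Hardy--Littlewood maximal function $M(g-g_k)$ of the error. The inequality $\leb\{Mf>\alpha\}\le C\|f\|_1/\alpha$ yields
\[
\leb\bigl(\{x\in [0,1]^n:M(g-g_k)(x)>2^{-k/2}\}\bigr)\le C\cdot 2^{-k/2}.
\]
Setting $U_K=\bigcup_{k\ge K}\{M(g-g_k)>2^{-k/2}\}$ gives uniformly effectively open sets whose measure is bounded by a computable sequence tending to $0$; after enlarging each to ensure an exactly computable measure, this becomes a Schnorr test. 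A Schnorr random $z$ lies outside some $U_K$, so $M(g-g_k)(z)\le 2^{-k/2}$ for every $k\ge K$. Combined with the convergence of the averages of $g_k$ on cubes around $z$, this forces $(\leb Q)^{-1}\int_Q g$ to be Cauchy as $Q\DA z$.

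For the converse, suppose $z$ is not Schnorr random and fix a Schnorr test $(V_k)_{k\in\NN}$ capturing $z$, with computable $\leb V_k\le 2^{-k}$. Define
\[
g=\sum_{k\in\NN}2^{k/2}\,\chi_{V_k}.
\]
Each summand has $L_1$-norm at most $2^{-k/2}$, so the partial sums form a computable $L_1$-Cauchy sequence, exhibiting $g$ as $L_1$-computable. Because $z\in V_k$ and $V_k$ is open, there is a cube $Q_k\ni z$ with $Q_k\sub V_k$ and $\leb Q_k\le 2^{-k}$. Then
\[
(\leb Q_k)^{-1}\int_{Q_k}g\;\ge\;2^{k/2},
\]
so the averages diverge to $\infty$ along the sequence $Q_k\DA z$, and $z$ fails to be a weak Lebesgue point of $g$.

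The principal obstacle is the effectivity step in the forward implication: the superlevel sets $\{M(g-g_k)>2^{-k/2}\}$ are effectively open, but their Lebesgue measure is a priori only lower semi-computable, whereas a Schnorr test requires computable measure. The fix is to replace each such set by a slightly larger effectively open set with exactly computable measure dominated by the Hardy--Littlewood bound (exploiting that this upper bound is itself a computable quantity), and then pass to the union. The converse is comparatively routine, the only care being to record explicitly that the partial sums of the series defining $g$ constitute an effective $L_1$-name.
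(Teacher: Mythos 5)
Your two directions fare quite differently.

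\textbf{Backward direction (not Schnorr random $\Rightarrow$ not a weak Lebesgue point).}  Your construction $g=\sum_k 2^{k/2}\chi_{V_k}$ is correct and is genuinely different from — and simpler than — what the paper does. The $L_1$-computability of $g$ follows from Fact~\ref{fa:1G} applied to each $\chi_{V_k}$ together with the tail estimate $\|\sum_{k>K}2^{k/2}\chi_{V_k}\|_1\le\sum_{k>K}2^{-k/2}$, and the divergence of the averages on $Q_k\subseteq V_k$ is immediate. The paper instead builds a \emph{modified} Schnorr test $(G_m)$ and takes the alternating, \emph{bounded} function $g=\sum_m(-1)^m 1_{G_m}$, and then proves the considerably finer oscillation property~(\ref{eqn:ours_is_better}) (averages with $\limsup=1$ and $\liminf=-1$). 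What that extra effort buys is boundedness: the paper needs a bounded $g$ for Theorem~\ref{thm:Lp_Schnorr} and, via Fact~\ref{referee}, for the corollary on $L_p$-computability and the $p$-variation norm. For Theorem~\ref{Pathak.Rojas.eaR} alone, your unbounded $g$ is perfectly adequate.

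\textbf{Forward direction (Schnorr random $\Rightarrow$ weak Lebesgue point).}  Here there is a genuine gap. The Hardy--Littlewood strategy is indeed the right idea (it is in the spirit of \cite{Pathak.Rojas.ea:12,Rute:12}), but the step you yourself single out as ``the principal obstacle'' is not resolved by the remark that follows. The superlevel set $\{M(g-g_k)>2^{-k/2}\}$ is $\SI1$ with only a \emph{left-r.e.}\ measure, bounded above by the Hardy--Littlewood constant. It is not routine to pass from a $\SI1$ set $U$ with a computable \emph{upper bound} on $\leb U$ to a $\SI1$ superset $U'\supseteq U$ with $\leb U'$ actually \emph{computable} and still small: the obvious padding constructions fail because filler added at stage $s$ can later be absorbed by the growth of $U$, so one cannot certify a modulus of convergence for $\leb(U'_s)$ without further structure. (This is exactly the gap between a Kurtz-style test, which only needs a measure bound, and a Schnorr test, which needs computable measure.) Asserting that the fix works ``exploiting that this upper bound is itself a computable quantity'' is not a proof; the published arguments avoid or finesse this point rather than invoking a general enlargement lemma. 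For what it is worth, the paper does not prove this direction either: it cites \cite[Thm.\ 1.6]{Pathak.Rojas.ea:12} verbatim in the proof of Theorem~\ref{thm:Lp_Schnorr}. So your forward direction is an attempt at something the paper leaves to the literature, and as written it needs either a correct proof of the enlargement step or a different route to computable measures (e.g., controlling the total $\sum_k\leb(\cdot)$ rather than each level set, or choosing the thresholds $\alpha$ adaptively at continuity points of $\alpha\mapsto\leb\{M(g-g_k)>\alpha\}$).
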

			In Theorem~\ref{thm:Lp_Schnorr} we   give a proof  of the implication ``$\LA$" in Theorem \ref{Pathak.Rojas.eaR},
			which we obtained   independently from~\cite{Pathak.Rojas.ea:12}\footnote{
				This and most other results of this paper were presented by Cameron Freer as a contributed talk on March 24, 2011 at the
				2011 North American Annual Meeting of the ASL (held in Berkeley, CA, from March 24 to March 27, almost a year before~\cite{Pathak.Rojas.ea:12} was available).\
				The abstract, entitled ``Effective Aspects of Lipschitz
Functions'', was published in the June 2012 Bull.\ Symb.\ Logic.
			}.
			We show that \emph{
					if $z$ is not Schnorr random, then there is a bounded $L_1$-computable function $g\colon [0,1]^n\ria \R$  
					such  that $z$ is not a weak Lebesgue point of $g$. } We note that the function obtained in~\cite{Pathak.Rojas.ea:12} is also bounded. In comparison,   the proof of Theorem~\ref{thm:Lp_Schnorr} shows that our function does not have $z$ as a weak Lebesgue point because of  a technical property (\ref{eqn:ours_is_better}), which was not achieved in~\cite{Pathak.Rojas.ea:12}. 
			
			As a corollary, we characterise Schnorr randomness in terms of differentiability of Lipschitz functions that are computable in the variation norm.

			We note that in dimension $1$,
			the implication ``$\LA$'' in Theorem \ref{Pathak.Rojas.eaR} can also be derived from the proof of Brattka, Miller, and Nies \cite[Theorem
			6.7]{Brattka.Miller.ea:nd}. In their implication (iii)$\to$(i), given a real $z$ that fails a \ML\ test $(G_m)\sN m$,
			they build a computable function $f$ of bounded variation that is not differentiable at $z$.
			(This result was already announced by
			Demuth \cite{Demuth:75}, albeit in constructive language.) It suffices to observe that, if the given
			test $(G_m)\sN m$ is a Schnorr test, then the function~$g$ with $\int_0^x g = f(x)$
			constructed in \cite[Claim 6.6]{Brattka.Miller.ea:nd} is $L_1$-computable.

		\subsection{Lipschitz functions and Schnorr randomness}
			Since the function $g\colon [0,1]\ria \R$ obtained in Theorem~\ref{thm:Lp_Schnorr} is bounded, the function $f$ given by $f(x) =\int_0^x g$ is Lipschitz,
			and $f'(x)= g(x)$ for almost every $x$.
			Thus, if $z$ is not Schnorr random, we can build a computable Lipschitz function $f$ with $f'$ $L_p$--computable
			for each computable real $p \ge 1$ that is not differentiable at $z$.

			To formulate an appropriate effectiveness condition for $f$ itself rather than $f'$, recall that any Lipschitz function is absolutely continuous.
			For any absolutely continuous function $f$, we have $V(f, [0,x])= \int_0^x |f'|$.
			Then $f'$ is $L_1$-computable iff $f$ is computable in the variation norm, defined by $\vectornorm f _{V} = |f(0)| +V(f, [0,1])$
			This means that there is an effective sequence of rational polynomials $(q_n)\sN n$ such that $\vectornorm {f - q_n}_{V} \le \tp{-n}$.
			The latter condition is stronger than the mere computability of $f$.
			At the end of Subsection~\ref{ss:Lp_comp} we will give somewhat more technical detail.
			Also see \cite[p.\ 376]{Carothers:00} for detail on the variation norm.

			Let us  compare our results on computable randomness and on the weaker notion of Schnorr randomness.
			Note that in each case we pass from a randomness test failed by a real $z$ to a computable Lipschitz function not differentiable at $z$.
			If the real $z$ is
			not computably random, as shown by a computable martingale $M$ that succeeds on $z$, we obtain a computable Lipschitz function $f$ that is not differentiable at $z$.
			If $z$ is not even Schnorr random, as shown by a Schnorr test,
			we obtain a computable Lipschitz function $f$ such that $f'$ is $L_p$-computable for each $p$; in particular, $f$ is computable in the variation norm.

	\section{Preliminaries}\label{s:prelims}
	We collect some background and definitions for later use.
		\subsection{Computability of functions on the unit interval} \label{ss:compfunctions}
			We paraphrase Definition~A in Pour-El and Richards \cite[p.\ 26]{Pour-El.Richards:89}.
			\begin{definition}\label{CompDef}
				A function $f\colon [0,1] \to \mathbb R$ is called \emph{computable} if
				\bi
					\item[(a)] $f(q)$ is a computable real uniformly in a rational $q \in [0,1]$, and
					\item[(b)] $f$ is \emph{effectively uniformly continuous}:
						there is a computable $h\colon \NN \ria \NN$ such that $|x-y| < \tp{-h(n)}$ implies $|f(x) - f(y) | < \tp{-n}$ for each $n$.
				\ei
			\end{definition}
			The definition can be extended almost verbatim to functions $f\colon [0,1]^n \to \mathbb R$; in (a) we take $n$-tuples of rationals.
			Also, it suffices to consider dyadic rationals in (a).

			Every Lipschitz function $f$ is effectively uniformly continuous. Thus (a) is sufficient for the computability of $f$.
			If $n=1$ and $f$ is a continuous monotonic function, then (a) is also sufficient by \cite[Prop.\ 2.2]{Brattka.Miller.ea:nd}.

		\subsection{Differentiability}
			We use notation from \cite{Brattka.Miller.ea:nd}. For a function
			$f\colon [0,1] \to \mathbb R$, the \emph{slope} at a pair $x,y$ of distinct reals is
			\[
				S_f(x,y) = \frac{f(y)-f(x)}{y-x}.
			\]
			Note that both $x<y$ and $y<x$ are allowed here.
			Recall that the upper and lower derivatives at $z$ are defined by
			\begin{eqnarray*}
				\ol D f(z) & = & \limsup_{h\ria 0} S_f(z, z+h), \quad\text{and}\\
				\ul D f(z) & = & \liminf_{h\ria 0} S_f(z, z+h).
			\end{eqnarray*}
		\subsection{Martingales and measures}\label{ss:martingales}

			\begin{definition} \label{df:MG} {\rm A \emph{martingale} is a function $2^{< \omega} \ria \R^+_0$ such that the fairness condition
				$ M(\sss 0) + M(\sss 1) = 2 M(\sss)$
				holds for each string $\sss$. We say that
				$M$ \emph{succeeds} on a sequence of bits $Z$ if $M(Z\uhr n)$ is unbounded, where $Z\uhr n$ as usual denotes the length $n$ prefix of $Z$.
				A martingale $M \colon \, 2^{< \omega} \ria \R^+_0$ is called \emph{computable} if $M(\sss)$ is a computable real uniformly in a string $\sss$. }
			\end{definition}
			Each martingale $M$ determines a measure on the algebra of clopen sets by
			assigning $[\sss]$ the value $ M(\sss) \tp{-\sssl}$. Via Carath\'eodory's extension theorem this can be extended to the Borel sets in Cantor space.
			Measures on Cantor space correspond to measures on $[0,1]$ as long as there are no atoms on dyadic rationals.
			The measure on $[0, 1]$ corresponding to $M$ is denoted by~$\mu_M$.
			Thus, $\mu_M$ is determined by the condition \bc $\mu_M[0.\sss, 0.\sss + \tp{-\sssl}) = M(\sss) \tp{-\sssl}$.
			\ec Given a martingale~$M$, let $\Fcn (M)$ be the cumulative distribution function of the associated measure. That is,
			\[
				\Fcn(M)(x) = \mu_M[0,x).
			\]
			Then $\Fcn(M)$ is non-decreasing and left-continuous. Hence it is determined by its values on the rationals.

			\begin{lemma}\label{lem:MartLipschitz}
				Let $f = \Fcn(B)$ for a martingale $B$. Suppose that $0\le c < d$ are constants such that $B(\sss) \in [c,d]$ for each string~$\sss$.
				Then for each pair of reals $x,y$ such that $0 \le x < y \le 1$ we have
				\bc $c(y-x) \le f(y)- f(x) \le d(y-x)$. \ec
				In particular, $f$ is Lipschitz with constant $d$.
			\end{lemma}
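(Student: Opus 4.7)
The plan is to first establish the inequality on dyadic endpoints, where the measure $\mu_B$ has an explicit product form, and then pass to arbitrary real endpoints via the left-continuity of $\mathrm{Fcn}(B)$.

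\emph{Dyadic case.} Suppose $x = k\cdot 2^{-n}$ and $y = m \cdot 2^{-n}$ with $0 \le k < m \le 2^n$. Enumerate the length-$n$ binary strings whose induced basic clopen intervals lie in $[x,y)$ as $\sigma_k, \sigma_{k+1}, \dots, \sigma_{m-1}$, so that
\[
[x,y) \;=\; \bigsqcup_{j=k}^{m-1}\bigl[0.\sigma_j,\, 0.\sigma_j + 2^{-n}\bigr).
\]
By the defining equation of $\mu_B$, each piece has measure $B(\sigma_j)2^{-n}$, hence
\[
f(y)-f(x) \;=\; \mu_B[x,y) \;=\; \sum_{j=k}^{m-1} B(\sigma_j)\, 2^{-n}.
\]
Since every $B(\sigma_j) \in [c,d]$ and there are $m - k = (y-x)\cdot 2^{n}$ summands, this is squeezed between $c(y-x)$ and $d(y-x)$, as required.

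\emph{General case.} Given arbitrary reals $0 \le x < y \le 1$, choose sequences of dyadic rationals $x_n \uparrow x$ and $y_n \uparrow y$. For all sufficiently large $n$ we have $x_n < y_n$, both dyadic, so the dyadic case yields
\[
c(y_n - x_n) \;\le\; f(y_n) - f(x_n) \;\le\; d(y_n - x_n).
\]
Because $f = \mathrm{Fcn}(B)$ is left-continuous (as it is the cumulative distribution function of a Borel measure on $[0,1]$), $f(x_n) \to f(x)$ and $f(y_n) \to f(y)$. Taking limits in the displayed inequality yields the stated two-sided bound. The Lipschitz assertion then follows immediately: for $x \ne y$, take the smaller as the left endpoint and apply the upper bound to get $|f(y) - f(x)| \le d\,|y - x|$.

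No genuine obstacle arises; the only subtle point is choosing to approximate from below (rather than from above) so as to exploit left-continuity of $\mathrm{Fcn}(B)$, and ensuring that when we refine $x$ and $y$ to a common dyadic denominator $2^{-n}$ the interval $[x,y)$ decomposes cleanly into length-$n$ cylinders so that the martingale values $B(\sigma_j)$ give the measure directly.
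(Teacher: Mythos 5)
Your proof is correct, but it takes a genuinely different route from the paper's. You split into an \emph{exact} dyadic case (where $[x,y)$ decomposes into level-$n$ cylinders and the measure is computed exactly as a sum of $B(\sigma_j)2^{-n}$), and then pass to arbitrary reals by approximating $x$ and $y$ from the left by dyadic rationals, invoking left-continuity of $\Fcn(B)$ to take limits. The paper instead works directly with arbitrary reals $x<y$: for each resolution $n$ it sandwiches $[x,y)$ between the dyadic grid points $(i,j)=(\lfloor x2^n\rfloor,\lceil y2^n\rceil)$, over- (resp.\ under-) counts by a couple of cells to obtain a bound of the form $d(y-x+O(2^{-n}))$ (resp.\ $c(y-x-O(2^{-n}))$), and then lets $n\to\infty$. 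Both are sound. Your version cleanly separates the combinatorial identity from the topological passage and leans on the left-continuity of the cdf, which the paper has already noted; the paper's version avoids any explicit appeal to continuity at the cost of carrying the small error term. Your observation that one must approximate from \emph{below} to match the half-open convention $\mu_B[0,x)$ is exactly the right point of care, and your use of a common dyadic denominator for $x_n,y_n$ is fine since any two dyadics share one.
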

			\begin{proof}
				To verify  the second inequality, for  an arbitrary $n$, let $(i,j) \in \NN\times\NN$ be given by
				\bc
					$(i, j) = (\lfloor x\cdot 2^n\rfloor, \lceil y\cdot 2^n\rceil)$.
				\ec
				Thus
				\begin{eqnarray*}
					&i\cdot \tp{-n}    &\le x < (i+1)\tp{-n} \quad\text{and}\\
					&(j-1)\cdot\tp{-n} &< y \le j \cdot\tp{-n}
				\end{eqnarray*}
				Then
				\begin{eqnarray*}
					f(y) - f(x) &=& \mu_M[x,y) \\
					& \le& \sum_{r=i}^{j-1} \mu_M [r \tp{-n}, (r+1)\tp{-n}) \\
					& \le& d((j-1)-i) \tp{-n} = d((j-1)\tp{-n}-(i+1)\tp{-n} + \tp{-n})\\
					& <  & d(y-x + \tp{-n}).
				\end{eqnarray*}
				The first inequality is proved in a similar way.
			\end{proof}

		\subsection{Dyadic cubes} \label{ss:dyadic_cubes}
			Let $\+Q$ be the subset of $[0,1]^n$ consisting of the vectors with a dyadic rational component.
			By a \emph{dyadic cube} we mean a closed subset $C$ of $[0,1]^n$ which for some $k$ is a product of $n$ intervals of the form $[i \tp{-k}, (i+1)\tp{-k}]$.

			Note that the binary expansion of reals yields a measure preserving map from $ [0,1]^n \setminus \+ Q$ to $(\cantor)^n $ with the product measure.
			A dyadic cube with edges of length $\tp{-k}$ corresponds to a clopen subset of the form
			$[\sss_1 ] \times \ldots \times [\sss_n]$ in $(\cantor)^n $, where each $\sss_i$ has length $k$.

			We say that $G \sub [0,1]^n$ is $\SI 1$ if $G$ is an effective union of open cubes with rational coordinates.
			By transfering a well known basic fact in Cantor space, this shows that
			from each $\SI 1$ set $V\sub [0,1]^n$ we can effectively determine a sequence $(C_i) \sN i$ of dyadic cubes that are disjoint outside $\+ Q$,
			so that $V \setminus \+Q $ equals their union outside $\+ Q$.
			We let $V_{t} = \bigcup_{i\le t} C_i$ and say that $C_i$ is enumerated into $V$ at stage $t$.

			Via the usual isometry $(\cantor)^n \cong \cantor$, we may define the binary expansion of a tuple $z =(z_0, \ldots, z_{n-1}) \in [0,1]^n \setminus \+ Q$:
			this is the bit sequence $Z$ given by $Z(ni +k) = Z_k(i)$, where $i,k \in \NN$, $k< n $, and $Z_k$ is the binary expansion of the real~$z_k$.
		\subsection{$L_p$-computability}\label{ss:Lp_comp}
			Recall that for $p \ge 1$, by $L_p([0,1]^n)$ one denotes the set of integrable functions $g\colon [0,1]^n\ria \R$ such that
			$\vectornorm g _p = ( \int |g|^p d\leb)^{1/p} < \infty$.
			In the following let $p \ge 1$ be a computable real.
			Pour-El and Richards \cite[p.\ 84]{Pour-El.Richards:89} define $g$ to be \emph{$L_p$-computable}
			if from a rational $\eps >0$ one can determine a computable function $h$ on $[0,1]^n$ such that $\vectornorm {g- h }_p < \eps$.
			Here the notion of computability for $h$ is the usual one of Subsection~\ref{ss:compfunctions}; in particular, $h$ is continuous.
			By \cite[Cor.\ 1a on p.\ 86]{Pour-El.Richards:89}
			the polynomials in $n$ variables with rational coefficients are effectively dense with respect to $\vectornorm{\cdot}_\infty$,
			so we might as well assume that $h$ is such a polynomial.

			The following is well-known in principle.
			\begin{fact}\label{fa:1G}
				If $V\sub[0,1]^n $ is $\SI 1$ and $\leb V$ is a computable real, then the characteristic function $1_V$ is $L_p$-computable,
				uniformly in a presentation of $V$ and Cauchy names for $\leb V$ and $p$.
			\end{fact}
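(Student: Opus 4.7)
The plan is to approximate $1_V$ in $L_p$ norm by a continuous (hence computable) function in two effective steps: first approximate $V$ from below by a finite union $V_t$ of dyadic cubes whose measure is close to $\leb V$, and then smooth $1_{V_t}$ across its boundary. Each step will be explicitly uniform in the given data.

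First, by Subsection~\ref{ss:dyadic_cubes}, from a presentation of $V$ one effectively produces a sequence $(C_i)\sN i$ of dyadic cubes that are pairwise disjoint outside $\+Q$, with $V_t = \bigcup_{i \le t} C_i$. Hence $\leb V_t = \sum_{i \le t} \leb C_i$ is rational and computable uniformly in $t$, and the sequence $(\leb V_t)_t$ increases to $\leb V$. Given a rational $\eps > 0$, using the Cauchy name for $\leb V$ together with the computable real $p$, one effectively searches for a stage $t$ such that $\leb V - \leb V_t < (\eps/2)^p$; such a $t$ exists by monotone convergence, and the inequality can be witnessed by rational approximations to both sides. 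For this $t$ we have $\vectornorm{1_V - 1_{V_t}}_p = (\leb V - \leb V_t)^{1/p} < \eps/2$.

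With $t$ fixed, $V_t$ is a finite polyhedral set whose boundary has Lebesgue measure zero. We then define a computable piecewise-linear function $h \colon [0,1]^n \to [0,1]$ equal to $1$ on the shrunken set $V_t^-$ obtained by contracting each $C_i$ by a small rational amount $\delta > 0$, equal to $0$ off the enlarged set $V_t^+$ obtained by dilating each $C_i$ by $\delta$, and linearly interpolated on the corridor between them. For $\delta$ chosen small enough (computed effectively from $\eps$, $p$, $n$, and $t$), the measure of the corridor $V_t^+ \setminus V_t^-$ is less than $(\eps/2)^p$, so $\vectornorm{1_{V_t} - h}_p < \eps/2$ and hence $\vectornorm{1_V - h}_p < \eps$ by the triangle inequality. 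If one prefers a polynomial in place of $h$, a final invocation of \cite[Cor.\ 1a on p.\ 86]{Pour-El.Richards:89} replaces $h$ by a rational polynomial within $\eps$ in sup-norm, hence within $\eps$ in $L_p$-norm.

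Every step---the enumeration of $V$ as dyadic cubes, the search for $t$, the choice of $\delta$, and the definition of $h$---is explicitly effective in the presentation of $V$ and the Cauchy names for $\leb V$, $p$, and $\eps$, which yields the claimed uniformity. I expect no serious obstacle; the one point requiring a little care is treating $p$ as a computable real rather than a rational when producing the target $(\eps/2)^p$, but this is handled by the standard fact that exponentials of computable reals with rational base are uniformly computable.
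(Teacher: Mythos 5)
Your proposal follows essentially the same route as the paper's proof: effectively enumerate $V$ as a union of dyadic cubes $V_t$, use the Cauchy name for $\leb V$ (together with the computability of $p$) to find $t$ with $\leb(V \setminus V_t) < (\eps/2)^p$, and then replace the indicator of the polyhedral set $V_t$ by a continuous computable function that differs from $1_{V_t}$ only on a corridor of small measure, finishing with the triangle inequality. The only cosmetic difference is the smoothing step: the paper uses the single explicit formula $h(x) = \max(0,\, 1 - N\,d(x, V_t))$ with $d$ the Euclidean distance and $N$ large, whereas you describe $h$ via a shrink/dilate construction with linear interpolation; yours is a bit less canonical (linear interpolation between two polyhedra is not a single well-defined operation, and shrinking adjacent cubes creates gaps), so the paper's distance-function formula is the tidier way to make the smoothing step rigorous, but the idea and the bookkeeping are the same.
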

			\begin{proof}
				Given rational $\eps > 0$, compute $t$ such that $\leb (V-V_t) < (\eps/2)^p$.
				Since $V_t$ is effectively given as a finite union of dyadic cubes, we can determine a computable function $h$ such that $\vectornorm {1_{V_t} - h}_p < \eps/2$.
				(For instance, let $h(x) = \max (0, 1- Nd(x, V_t))$, where $d$ denotes Euclidean distance, and $N\in \NN$ is an appropriate large number computed from $\eps$ and $p$.)
				This implies $\vectornorm {1_V - h}_p < \eps$.
			\end{proof}
				For $1 \le q\le p$, then $\|g\|_q \le \|g\|_p$, so every $L_p$-computable function is $L_q$-computable.
			The following  fact  and its proof suggested by a referee  shows that  for bounded functions, the converse holds. It is sufficient to consider $q=1$.
				\begin{fact}[due to the referee]\label{referee}
				If $g$ is $L_1$-computable and bounded,
				then $g$ is $L_p$-computable for each computable real $p\ge 1$.
			\end{fact}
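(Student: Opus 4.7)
The plan is to combine a truncation trick with a straightforward interpolation between $L_1$ and $L_\infty$ norms. Let $M$ be a rational bound on $|g|$ (either assumed given, or obtained from the hypothesis of boundedness as a non-uniform parameter). Given a rational $\eps>0$ and computable $p\ge 1$, I want to produce a computable function $h$ on $[0,1]^n$ with $\|g-h\|_p<\eps$.

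First, set $\delta = \eps^{p}/(2M)^{p-1}$, which is a positive computable real obtained uniformly from $\eps$, $M$, and $p$. Using the $L_1$-computability of $g$, compute a function $h'$ (for instance, a rational polynomial) with $\|g-h'\|_1 < \delta$. The function $h'$ need not be bounded by $M$, so define
\[
h(x) = \max\bigl(-M,\min(M,h'(x))\bigr).
\]
Since $x\mapsto \max(-M,\min(M,x))$ is computable and $1$-Lipschitz, $h$ is still computable in the sense of Definition~\ref{CompDef} (uniformly in $h'$ and $M$), and $|h|\le M$ everywhere.

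The key pointwise observation is that truncation never worsens the approximation to a function already bounded by $M$: since $|g(x)|\le M$, one checks by cases that $|h(x)-g(x)|\le |h'(x)-g(x)|$. Therefore $\|g-h\|_1\le \|g-h'\|_1<\delta$, and moreover $|g-h|\le 2M$ pointwise. Interpolating,
\[
\|g-h\|_p^{\,p} \;=\; \int |g-h|^{p} \;\le\; (2M)^{p-1}\int|g-h| \;<\; (2M)^{p-1}\delta \;=\; \eps^{p},
\]
so $\|g-h\|_p<\eps$, as required. Finally, if one wishes $h$ to be a rational polynomial (rather than a continuous computable function), one can, after constructing $h$, invoke the effective density of rational polynomials with respect to $\|\cdot\|_\infty$ from \cite[Cor.~1a, p.~86]{Pour-El.Richards:89} to replace $h$ by such a polynomial within $\eps$ in the sup norm, hence within $\eps$ in $\|\cdot\|_p$ on $[0,1]^n$.

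The only mild obstacle is that the truncation takes us outside the class of polynomials, so some care is needed to stay within the Pour-El--Richards framework; this is handled either by allowing $h$ to be an arbitrary computable function (which the definition of $L_p$-computability permits) or by the final polynomial-approximation step above. Everything else is uniform: the choice of $\delta$ and the construction of $h$ are uniform in $\eps$, $M$, and $p$, so the argument shows $L_p$-computability in the required effective sense.
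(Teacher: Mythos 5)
Your argument is correct and takes essentially the same approach as the paper's: approximate $g$ in $L_1$ by a bounded computable $h$, then use the pointwise bound $|g-h|\le 2M$ to interpolate $\|g-h\|_p^p\le (2M)^{p-1}\|g-h\|_1$ and choose the $L_1$-tolerance $\delta=\eps^p/(2M)^{p-1}$ accordingly. The only difference is that you spell out the truncation step --- replacing $h'$ by $\max(-M,\min(M,h'))$ and observing this cannot worsen the $L_1$-distance to a function already bounded by $M$ --- which the paper compresses into the phrase ``we can assume that $|h(x)|\le C$.''
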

			\begin{proof}
				Suppose $|g(x)|\le C$ for a constant $C\ge 1$.
				Uniformly in   a positive rational $\epsilon\le 1$, we can determine a computable function $h$ on $[0,1]^n$ such that
				$\|g-h\|_1<{(2C)}^{1-p}\epsilon^p$. We can assume that $|h(x)|\le C$ for each $x$, and so $\alpha:=|\frac{g(x)-h(x)}{2C}|\le 1$ for each $x$, and so
				$\alpha^p\le\alpha$. Then
				\begin{eqnarray*}
					{(2C)}^{-p} \|g-h\|^p_p
					&=& \int {\left| \frac{g-h}{2C}\right|}^p d\lambda \le 
					   \int {\left| \frac{g-h}{2C}\right|} d\lambda\\
					&=& {(2C)}^{-1}\|g-h\|_1 < {(2C)}^{-p}\epsilon^{-p}
				\end{eqnarray*}
				Thus, $\|g-h\|_p<\epsilon$.
			\end{proof}

		\subsection{The $p$-variation norm} \label{ss:variation_norm}
			The functions $f\colon [0,1]\to \RR$ of bounded variation form a Banach space under the variation norm defined by \bc $\vectornorm f _{V} = |f(0)| +V(f, [0,1])$.
			\ec We have $\vectornorm{f}_{V} \ge |\!|f|\!|_{\infty}$ (the usual sup norm).
			Let $AC_0[0,1]$ be the vector space of absolutely continuous functions $f\colon [0,1]\to \RR$ such that $f(0) =0$.
			Let $\mathcal L_1[0,1]$ denote the usual set of equivalence classes of functions in $L_1[0,1]$ modulo  equality almost everywhere.	The map $g \mapsto \lambda x. \int_0^x g$ is a computable Banach space isometry
			\bc $(\mathcal L_1[0,1], \vectornorm{\cdot }_1 ) \to (AC_0[0,1], \vectornorm{\cdot}_{V})$. \ec
			Its inverse is the derivative, which is a.e.\ defined for an absolutely continuous function. See, e.g., \cite[p.\ 376]{Carothers:00} for more detail.
			Note that the inverse is automatically computable.

			Let $ p > 1$. For a function $g\colon [0,1] \ria \R$, and $0 \le x< y \le 1$ the \emph{$p$-variation} of $g$ in $[x,y]$ is
			\[
				V_p(g,[x,y]) = \sup \left\{\sum_{i=1}^{n-1} \frac{\bigl| g(t_{i+1}) - g(t_i)\bigr|^p}{|t_{i+1}- t_i|^{p-1}} : x \le t_1 < t_2 < \ldots < t_n \le y\right\}.
			\]
			Let
			\bc $\vectornorm f _{V_p} = |f(0)| +(V_p(f, [0,1]))^{1/p}$, \ec
			and let $A_p[0,1]$ denote the class of functions $f$ defined on $[0,1]$ with $f(0) = 0$ and $\vectornorm f _{V_p} < \infty$.
			Riesz~\cite{Riesz:1910} showed that each function in $A_p[0,1]$ is absolutely continuous.
			In analogy to the isometry of Banach spaces above, he also showed that the map $g \to \lambda x. \int_0^x g$ yields an isometry

			\bc $(\mathcal L_p[0,1], \vectornorm{\cdot }_p ) \to (A_p[0,1], \vectornorm{\cdot}_{V_p})$, \ec
			with inverse the derivative. Note that for computable $p$, this map is computable with respect to the relevant norms.

			We remark that for computable $p \ge 1$, the space $(\mathcal L_p[0,1], \vectornorm{\cdot }_p ) $ is also effectively isomorphic,
			in the sense of computable Banach spaces, to the Sobolev space $W^{1,p}(0,1)$.
			This uses the so-called ACL characterization of Sobolev spaces.
			See, e.g., \cite[Thm.\ 2.1.4]{Ziemer:89}.

		\subsection{Interval-r.e.\ functions}\label{ss:interval-r.e.}

			\n
			We recall that a real $x \in [0,1]$ is \emph{left-r.e.}\ if the set $\{q \in
			\mathbb Q\colon \, q < x\}$ is r.e. If this left cut equals $W_e$, we say that $e$ is an \emph{index} for $x$.
			(Such a real is also called ``left-computable'', and sometimes ``lower semicomputable'', in the literature.)

			\begin{definition}\label{def:intervalce}
				A non-decreasing function $f$ defined on $[0,1]$ is called
				\emph{interval-r.e.} if $f(0) = 0$, and $f(y)-f(x)$ is left-r.e.\ uniformly in rationals $x<y$.
			\end{definition}
			Suppose in Definition~\ref{def:intervalce} we drop the restriction on $x,y$ being rational,
			and require the stronger condition that $f(y)-f(x)$ is left-r.e.\ relative to Cauchy names of reals $x< y$.
			The variation of a computable function, and the functions $f_M$ defined below, satisfy this stronger condition. For continuous functions,
			the two conditions are equivalent. For suppose the weaker condition in Definition~\ref{def:intervalce} holds.
			If $(p_n)\sN n$ and $(q_n)\sN n$ are Cauchy names for $x$ and $y$ respectively,
			then $x\le p_n + \tp{-n}$ and $q_n- \tp{-n} \le y$ for each $n$.
			Then by continuity $f(y)- f(x)$ is the sup of the values $f(q_n- \tp{-n} ) - f(p_n + \tp{-n})$ where $p_n + \tp{-n} \le q_n- \tp{-n}$.
			This is left-r.e.\ in the Cauchy names by hypothesis.

			See~\cite[Ch.\ 2]{Nies:book} or \cite{Downey.Hirschfeldt:book}
			or \cite{LV08} for background on prefix-free machines and prefix-free complexity~$K$.
			For a set $B \sub \strcantor$ let $\Opcl B$ denote the open set $\{ X \in \cantor \colon \, \ex n \, X \uhr n \in B\}$.
			Let
			$S$ be a prefix-free machine. We identify a binary string $\gamma$
			with the dyadic rational $0.\gamma$. The following function
			is interval-r.e.:
			\[
				f_S (x) = \leb \Opcl {\{ \sss \colon \, S(\sss) < x \}}.
			\]
			Thus, $f_S(x)$ is the probability that $S$ prints a dyadic rational less than~$x$.
			Note that $f_S$ is left continuous ($f_S(x) = f_S(x^-)$ for each~$x$) and hence, being increasing, lower semi-continuous.
			Furthermore, $f_S$ is discontinuous at $x$
			(namely, $f_S(x) < f_S(x^+)$) precisely if $x<1 $ is a dyadic rational in the domain of $S$.

			Let $\mathbb U$ be a universal prefix free machine, and consider the increasing interval-r.e.\ function $f_{\UM} (x) $.
			Then $f_\UM(1) = \Om_\UM$, and thus $f_\UM$ is not computable on the rationals.

			\begin{prop}
				Let $z\in [0,1]$. If $\ol Df_{\UM} (z)< \infty$ then $z$ is \ML\ random.
			\end{prop}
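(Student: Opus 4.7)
\n
The plan is to prove the contrapositive: suppose $z \in [0,1]$ is not \ML\ random, and deduce $\ol D f_\UM(z) = \infty$. By the Schnorr--Levin characterization of randomness via prefix-free complexity, for every $c \in \NN$ and every threshold $N$ there is some $n \ge N$ with $K(z\uhr n) \le n - c$; indeed, applying the failure of randomness at deficit $c+N$ and using $K \ge 0$ forces the witness $n$ to exceed $N$. Hence arbitrarily large $n$ realize any prescribed complexity deficit.

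Fix such $n$, let $\gamma = z\uhr n$, and set $q = 0.\gamma$, so that $q \le z \le q + 2^{-n}$. The key estimate is
\[
f_\UM(q + 2^{-n}) - f_\UM(q) \ge 2^{-K(\gamma)} \ge 2^{c-n}.
\]
This holds because the prefix-free set of programs $\sss$ with $\UM(\sss) = \gamma$ has measure at least $2^{-K(\gamma)}$, and each such $\sss$ satisfies $q = 0.\gamma \le 0.\UM(\sss) < q + 2^{-n}$, hence contributes to the difference $f_\UM(q+2^{-n}) - f_\UM(q)$ by the definition of $f_\UM$. Since $f_\UM$ is non-decreasing, at least one of $f_\UM(z) - f_\UM(q)$ and $f_\UM(q+2^{-n}) - f_\UM(z)$ is $\ge 2^{c-n-1}$; setting $h = q - z$ or $h = q + 2^{-n} - z$ accordingly (taking the latter when $z = q$, which makes the first option vacuous), we obtain $|h| \le 2^{-n}$ with $S_{f_\UM}(z,z+h) \ge 2^{c-1}$.

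Letting $c$ and $n$ grow in tandem produces a sequence $h_k \to 0$ along which $S_{f_\UM}(z, z+h_k) \to \infty$, giving $\ol D f_\UM(z) = \infty$. The main step requiring care is the conversion of the complexity inequality $K(z\uhr n) \le n - c$ into a lower bound on the increment of $f_\UM$ across an interval of length $2^{-n}$ surrounding $z$; once that conversion is in hand via the definition of $f_\UM$, the remainder is an elementary slope estimate exploiting monotonicity.
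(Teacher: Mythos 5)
Your proof is correct and follows essentially the same route as the paper's: use the Schnorr--Levin complexity characterization of failure of Martin-L\"of randomness to extract a short program outputting $z\uhr n$, observe that this program contributes measure at least $2^{-K(z\uhr n)}\ge 2^{c-n}$ to the increment of $f_\UM$ across a dyadic interval of length $2^{-n}$ around $z$, and convert that into a large slope. The only cosmetic difference is bookkeeping: the paper fixes $h=-2^{-n}$ and works directly with the interval $[z-2^{-n},z)$, which contains $0.(z\uhr n)$ whenever $z$ is non-dyadic, yielding a slope of at least $2^{c}$; you instead work with the dyadic interval $[q,q+2^{-n})$ and apply a pigeonhole between $[q,z]$ and $[z,q+2^{-n}]$, giving $2^{c-1}$. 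Both establish $\ol D f_\UM(z)=\infty$; yours is slightly more careful about the dyadic endpoint case and about explicitly noting that $n\to\infty$ as $c\to\infty$ (which the paper leaves implicit, relying on $K\ge0$), but there is no substantive difference in the argument.
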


			\begin{proof}
				Suppose that $z$ is not ML-random. Given $c \in \NN$ pick $n$ such that $K(z\uhr n) \le n-c$. Let $h = - \tp{-n}$. We have
				\bc $\tp{-n+c} \le \leb \Opcl {\{ \sss \colon \, \mathbb U(\sss) \in [z+h,z)\} } = f_{\UM}(z) -f_{\UM}(z+h)$. \ec
				Therefore $ 2^c \le (f_{\UM}(z+h) -f_{\UM}(z))/h $.
			\end{proof}

			\n
			In \cite{Bienvenu.Greenberg.ea:preprint} it is shown that, conversely, if $z$ is \ML\ random then each interval-r.e.\ function has finite upper derivative.
			In contrast, there is a function of the form $f_S$ for a prefix-free machine $S$ that is not differentiable at Chaitin's $\Om$.
			Simply let the domain of $S$ generate the open set in Cantor space corresponding to $[0,\Om)$ (i.e., $x < \Om $ iff $\ex n \, S(x\uhr n) \DA $).
			Then $f_S$ increases to $\Om$ in smaller and smaller steps, and it is constant equal to $\Om$ thereafter.
			It is easy to check that $f'_S(\Om)$ fails to exist.
			\cite{Bienvenu.Greenberg.ea:preprint} also show that
			a randomness property of a real $z$ slightly stronger than \ML's ensures that each interval-r.e.\ function is differentiable at~$z$.
			We will discuss this in the concluding section of the paper.

	\section{Characterizing the variation of computable (Lipschitz) functions}
		\label{s:LipVar}

		\n
		Let $g\colon [0,1] \ria \R$ be a computable function.
		Since $V(g,[x,y]) + V(g, [y,z])$ $= V (g, [x,z])$ for $x< y< z$, we see that the function $f(x) = V(g, [0,x])$ is interval-r.e.\
		and continuous. Note that if $f$ is Lipschitz with constant $c$, then the function $g$ is necessarily Lipschitz with constant at most $c$, because for $x<y$ we have
		\bc $|g(y ) - g(x) | \le V(g, [x,y]) = V(g, [0,y]) - V(g, [0,x])$. \ec
		As our main result in this section, we will prove the converse for Lipschitz functions:
		every interval-r.e., non-decreasing Lipschitz function $f$ is of the form $ V(g, [0,x])$ for some computable Lipschitz function $g$.
		We begin with the simpler result that the total variation can be any given left-r.e.\ real.

		\begin{fact}\label{fact:easiest}
			For each left-r.e.\ real $\aaa$, $0 \le \aaa \le 1$, there is a computable function $g$ which is Lipschitz with constant $1$ such that $V(g, [0,1]) = \aaa$.
		\end{fact}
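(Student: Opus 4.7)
The plan is to construct $g$ as a concatenation of triangular bumps of slope $\pm 1$ whose base widths sum to $\alpha$.  Starting from any computable non-decreasing rational approximation $(p_s)\sN s$ of $\alpha$ with $p_0=0$, I define a slowed-down approximation $(r_s)\sN s$ by $r_0=0$ and
\[
r_{s+1}\;=\;r_s+\min\!\bigl(p_{s+1}-r_s,\;1/(s+1)\bigr),
\]
and set $\delta_s:=r_{s+1}-r_s$.  I then define $g\colon[0,1]\to\RR$ to be the triangular bump $g(x)=\min(x-r_s,\,r_{s+1}-x)$ on each $[r_s,r_{s+1}]$, extended by $g(x)=0$ for $x\ge\sup_s r_s$.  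This $g$ is piecewise linear with slopes in $\{-1,0,+1\}$, and it is continuous at $\sup_s r_s$ because the bump heights $\delta_s/2\le 1/(2(s+1))$ tend to $0$; hence $g$ is Lipschitz with constant $1$, and each bump contributes exactly $\delta_s$ to the total variation.

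The key technical point is to show $r_s\to\alpha$, so that $V(g,[0,1])=\sum_s\delta_s=\alpha$.  By induction $r_s\le p_s\le\alpha$, so $(r_s)$ is non-decreasing and bounded and converges to some $r_\infty\le\alpha$; if $r_\infty<\alpha$ strictly, then for all sufficiently large $s$ one would have $p_{s+1}-r_s>1/(s+1)$, forcing $\delta_s=1/(s+1)$, so that $r_\infty-r_S\ge\sum_{s\ge S}1/(s+1)=+\infty$, contradicting the boundedness of $(r_s)$.  The divergence of the harmonic cap is exactly what allows $(r_s)$ to catch up to $\alpha$ even when $\alpha$ is close to~$1$.

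For the computability of $g$ (effective uniform continuity is automatic for Lipschitz functions, so only condition (a) of Definition~\ref{CompDef} must be checked), I proceed as follows.  Given a rational $x\in[0,1]$ and $n\in\NN$, I pick $S:=2^{n+1}$, compute the rationals $r_0,\ldots,r_S$, and either output $g(x)$ exactly as a rational (if $x<r_S$, in which case $x$ lies in a unique bump $[r_s,r_{s+1}]$ with $s<S$) or output $0$ (if $x\ge r_S$).  In the latter case, either $x$ lies in some future bump with index $s\ge S$ (so $g(x)\le 1/(2(s+1))\le 1/(2(S+1))$), or $x\ge\alpha$ and $g(x)=0$; either way the error is below $2^{-n}$.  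The main obstacle handled here is the tension in choosing the cap sequence: it must decay computably to $0$ (for effective bump-height control) yet have divergent sum (so $(r_s)$ can actually reach $\alpha$), ruling out summable caps like $2^{-s}$ when $\alpha$ is close to $1$.
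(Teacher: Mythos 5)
Your proof is correct, and it is a genuine variant of the paper's construction rather than a reproduction of it. Both arguments encode the left-r.e.\ real $\alpha$ as the total horizontal length of triangular pieces of slope $\pm 1$, with the amplitudes forced to decay to $0$ so that $g$ is computable. The mechanisms for amplitude control differ, however. The paper takes an arbitrary increasing dyadic approximation $(\alpha_s)$ and fills each block $[\alpha_s,\alpha_{s+1}]$ with a rapid zigzag of period about $2^{-(s+1)}$, so the amplitude on the $s$-th block is $O(2^{-s})$ no matter how large the increment $\alpha_{s+1}-\alpha_s$ happens to be; the variation contributed by each block is still its full width. You instead place a single triangular bump on each block $[r_s,r_{s+1}]$ and achieve amplitude decay by slowing the approximation itself, capping each increment by $1/(s+1)$. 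This forces you to verify the extra point--which you do correctly--that a \emph{divergent} cap sequence is required so that the slowed approximation can actually catch up to $\alpha$; the paper's zigzag construction sidesteps this issue entirely because it never needs to modify the approximation. In exchange, your piecewise definition of $g$ is geometrically simpler (one triangle per step rather than a block of many), and the estimate $|g(x)|\le 1/(2(s+1))$ for $x$ in the $s$-th block is what makes the computability argument work with $S=2^{n+1}$. Both approaches are sound; yours trades a cleaner geometry for an extra convergence argument.
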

		\begin{proof}
			For each interval of dyadic rationals $[p,q]$, where $p = i \tp{-n}$, $q = j \tp{-n}$, $0 \le i \le j < \tp n$, and each $k > n$,
			let $W(p,q; k)$ be the function that zigzags
			\[
				(q-p)2^k = (j-i) \tp{k-n}
			\]
			times within $[p,q]$, with slope $\pm 1$. Then the total variation of $W(p,q; k)$ is $q-p$.

			Now let $\aaa = \lim_s \aaa_s$ where $(\aaa_s) \sN s$ is an increasing effective sequence of dyadic rationals $\aaa_s $ of the form $ i \tp{-n}$ with $n< s$. Let

			\bc $g = \sum_s W(\aaa_s, \aaa_{s+1}; s+1)$. \ec
			It is easy to check that $g$ is a computable function that is Lipschitz with constant~$1$.
			Furthermore, since variation is additive over partitions into disjoint intervals, $g$ has variation~$\aaa$.
			(For intuition, note that as $\alpha_s$ approaches $\alpha$, the oscillations become flatter and flatter. To the right of $\alpha$, $g$ is constant.)
		\end{proof}
		\n
		It is well known that for  any $L_1$-computable function $h$, the function $x \mapsto \int_0^x h$ is computable.  		Using the foregoing fact we provide an example showing that  the converse fails.
		\begin{cor}
			Some nondecreasing computable Lipschitz function $u$ is not of the form $x \mapsto \int_0^x v$ for any $L_1$-computable function $v$.
		\end{cor}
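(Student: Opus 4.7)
The plan is to monotonize the zigzag function of Fact~\ref{fact:easiest}. Fix a left-r.e.\ real $\alpha \in (0,1)$ that is not computable, let $g$ be the function produced by Fact~\ref{fact:easiest} with $V(g,[0,1]) = \alpha$, and define $u(x) = x + g(x)$. Because each zigzag piece of $g$ has slope $\pm 1$ and $g$ is constant on $(\alpha, 1]$, the derivative $u'$ takes values in $\{0,2\}$ on $[0,\alpha]$ and equals $1$ on $(\alpha,1]$; hence $u$ is computable, nondecreasing, Lipschitz with constant $2$, and satisfies $u(0) = 0$.

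Suppose for a contradiction that $u(x) = \int_0^x v$ for some $L_1$-computable $v$. The computable Banach space isometry
\[
(\mathcal L_1[0,1],\vectornorm{\cdot}_1) \longrightarrow (AC_0[0,1],\vectornorm{\cdot}_V), \qquad v \mapsto \Bigl(x \mapsto \int_0^x v\Bigr),
\]
recalled in Subsection~\ref{ss:variation_norm}, renders $u$ computable in the variation norm. Concretely, combining the definition of $L_1$-computability with the effective sup-norm density of rational polynomials in computable functions (Subsection~\ref{ss:Lp_comp}) yields a computable sequence of rational polynomials $p_n$ with $\vectornorm{v - p_n}_1 < 2^{-n}$. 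Setting $Q_n(x) = \int_0^x p_n$ produces rational polynomials with $Q_n(0) = 0$ and $\vectornorm{u - Q_n}_V < 2^{-n}$.

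Now define $r_n(x) = Q_n(x) - x$. Then $u - Q_n = g - r_n$, so $V(g - r_n,[0,1]) < 2^{-n}$, and the triangle inequality for the variation seminorm yields $|V(g,[0,1]) - V(r_n,[0,1])| < 2^{-n}$. Since $r_n$ is absolutely continuous, $V(r_n, [0,1]) = \int_0^1 |r_n'|$, which is a computable real uniformly in the coefficients of $r_n$. Hence $\alpha = V(g,[0,1])$ is the limit of an effective Cauchy sequence of computable reals and so is itself computable, contradicting the choice of $\alpha$. The only step requiring vigilance is the effectivity of the isometry in passing from $L_1$-approximants of $v$ to variation-norm approximants of $u$; this is immediate once we observe that antiderivatives of rational polynomials are themselves rational polynomials.
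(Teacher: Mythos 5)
Your proof is correct and takes essentially the same route as the paper's: fix a left-r.e.\ noncomputable $\alpha$, take $g$ from Fact~\ref{fact:easiest}, monotonize to get $u$, and derive a contradiction by showing that $u=\int_0^x v$ with $v$ $L_1$-computable would force $V(g,[0,1])=\alpha$ to be a computable real. (The paper uses $u(x)=x-g(x)$ rather than $x+g(x)$, but these are interchangeable.) The only difference is in how the final computability of $V(g,[0,1])$ is extracted: the paper invokes the classical identity $V(g,[0,x])=\int_0^x |g'|$ together with the $L_1$-computability of $|h|$ for $h=1-v$, whereas you go through the $L_1\to AC_0$ isometry explicitly, pushing a Cauchy sequence of rational polynomials $p_n$ approximating $v$ to antiderivatives $Q_n$ approximating $u$ in the variation norm, and then using the seminorm triangle inequality $|V(g,[0,1])-V(r_n,[0,1])|\le V(g-r_n,[0,1])$. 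These are the same mathematical content packaged differently — the isometry is precisely the statement $\vectornorm{f}_V=|f(0)|+\int_0^1|f'|$ — so I would not call it a genuinely different argument, just a slightly more hands-on verification of the same reduction.
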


		\begin{proof}
			Let $\aaa$ be a left-r.e.\ noncomputable real, and let $g$ be as in Fact~\ref{fact:easiest}. Let $u(x)= x -g(x)$.
			Then $u$ is nondecreasing and Lipschitz with constant $2$.
			Assume for a contradiction that $u$ is of the form $\int_0^x v$ for an $L_1$-computable function~$v$.

			We have $g(x) = \int_0^x h$, where $h= 1-v$. Then $V ( g, [0,x]) = \int_0^x |h|$ by a classic result of analysis
			(see \cite[Prop.\ 5.3.7]{Bogachev.vol1:07}). Furthermore, $|h|$ is $L_1$-computable. Thus, $V ( g, [0,1]) $ is a computable real, a contradiction.
		\end{proof}

		\n
		The proof of our main result in this section makes use of signed martingales,
		namely, functions $L \colon \strcantor \to \mathbb R$ satisfying the martingale equality $2 L(\sss) = L(\sss0 ) + L(\sss 1)$ for each string $\sss$.
		Given a signed martingale $L$, let
		\[
			V_L(\sigma) = \sup_k \tp{-k} \sum_{|\eta| =k} |L(\sss \eta)|.
		\]
		It is easy to build a computable $L$ such that $V_L(\emptyset) = \infty$. If $V_L(\sss) < \infty $ for each $\sss$, we say that
		$V_L$ is the \emph{variation martingale} of $L$.
		Note that the expression on the right is nondecreasing in $k$.
		Thus, if $L$ is computable then the variation martingale $V_L$ is a left-r.e.\ (non-negative) martingale.

		We say that a martingale $M \colon \strcantor \to \RR^+_0$ is \emph{non-atomic} if
		the corresponding measure $\mu$ on Cantor space is non-atomic (see the discussion after Definition \ref{df:MG}).
		This means that for each $X \in \cantor$ we have $M(X\uhr n) = o(2^n)$.
		By compactness of Cantor space, the function $X \mapsto \mu[0,X]$ is uniformly continuous.
		So, in fact we have the apparently stronger condition that $M(\sss)= o(\tp{\sssl})$ for each string~$\sss$.

		\begin{lemma}\label{lem:left-r.e. and variation}
			Let $M\colon \strcantor \to \mathbb R^+_0$ be a left-r.e.\ non-atomic martingale.
			Then there is a computable signed rational-valued martingale $L$ such that $M= V_L$.
			Furthermore, $|L(\sss)| \leq M(\sss)$ for each $\sss$.
		\end{lemma}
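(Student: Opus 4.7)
The plan is to build $L$ stage by stage along an increasing computable sequence $0 = n_0 < n_1 < n_2 < \cdots$: at stage $s$ I commit the values $L(\sigma)$ for $|\sigma| = n_s$, and intermediate values $L(\sigma)$ for $n_{s-1} < |\sigma| < n_s$ are then forced by the martingale averaging equation. I set $L(\emptyset) := 0$ and fix a uniformly computable sequence $(M_s)_{s \in \NN}$ of rational-valued martingales with $M_s \le M_{s+1}$ and $\sup_s M_s = M$ pointwise. Non-atomicity of $M$ together with compactness of $\cantor$ gives $\max_{|\tau|=k} M(\tau) 2^{-k} \to 0$ as $k \to \infty$; since $M_s \le M$, the same holds for each $M_s$, and because $M_s$ is a rational martingale I can effectively choose $n_s > n_{s-1}$ satisfying $\max_{|\tau|=n_s} M_s(\tau) 2^{-n_s} < 2^{-n_{s-1}-s}$.

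At stage $s \ge 1$, assume $L$ has been defined on all strings of length $\le n_{s-1}$ with $|L(\sigma)| \le M_{s-1}(\sigma)$. For each $\sigma$ of length $n_{s-1}$ set $a_\sigma := L(\sigma)$ and $T_\sigma := 2^{n_s-n_{s-1}} a_\sigma$, and pick an extension $\tau^\ast_\sigma \succeq \sigma$ of length $n_s$ that maximizes $M_s$. A short induction on $N$ shows that for any nonnegative reals $x_1, \ldots, x_N$ and any $T$ with $|T| \le \sum_j x_j$ there exist signs $\epsilon_j \in \{-1,+1\}$ with $|\sum_j \epsilon_j x_j - T| \le \max_j x_j$; I apply this effectively to $\{M_s(\tau) : \tau \succeq \sigma,\ |\tau|=n_s,\ \tau \ne \tau^\ast_\sigma\}$ with target $T_\sigma$, whenever $|T_\sigma|$ does not exceed the sum. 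Then set $L(\tau) := \epsilon_\tau M_s(\tau)$ for $\tau \ne \tau^\ast_\sigma$ and $L(\tau^\ast_\sigma) := T_\sigma - \sum_{\tau \ne \tau^\ast_\sigma} L(\tau)$; the induction bound gives $|L(\tau^\ast_\sigma)| \le \max_{\tau \ne \tau^\ast_\sigma} M_s(\tau) \le M_s(\tau^\ast_\sigma)$ by choice of $\tau^\ast_\sigma$. In the infeasible case $|T_\sigma| > \sum_{\tau \ne \tau^\ast_\sigma} M_s(\tau)$ I instead set all $\epsilon_\tau = \mathrm{sgn}(T_\sigma)$, which gives $L(\tau^\ast_\sigma) = \mathrm{sgn}(T_\sigma)\bigl(M_s(\tau^\ast_\sigma) - 2^{n_s-n_{s-1}}(M_s(\sigma) - |a_\sigma|)\bigr)$, and the infeasibility inequality rearranges to $|L(\tau^\ast_\sigma)| \le M_s(\tau^\ast_\sigma)$. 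Intermediate values inherit $|L(\sigma)| \le M_s(\sigma)$ by averaging.

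For verification, $|L(\tau)| \le M(\tau)$ at every $\tau$ gives $V_L(\sigma) \le 2^{-k}\sum_{|\eta|=k} M(\sigma\eta) = M(\sigma)$ for each $k$. For the reverse, fix $\sigma$ and any $s$ with $|\sigma| \le n_{s-1}$. Among the $2^{n_s-|\sigma|}$ length-$n_s$ descendants of $\sigma$, exactly $2^{n_{s-1}-|\sigma|}$ are residuals $\tau^\ast_\rho$ (one per length-$n_{s-1}$ extension $\rho$ of $\sigma$), and every other descendant $\tau$ satisfies $|L(\tau)| = M_s(\tau)$; hence
\[
	V_L(\sigma) \;\ge\; 2^{-(n_s-|\sigma|)}\!\!\sum_{\tau \succeq \sigma,\, |\tau|=n_s}\!\!|L(\tau)| \;\ge\; M_s(\sigma) - 2^{n_{s-1}-n_s} B_s,
\]
where $B_s := \max_{|\tau|=n_s} M_s(\tau)$. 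Our choice of $n_s$ forces $2^{n_{s-1}-n_s} B_s < 2^{-s}$, and letting $s \to \infty$ gives $V_L(\sigma) \ge M(\sigma)$, completing the equality $V_L = M$.

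The main obstacle is proving existence of signs with error at most $\max_j x_j$, rather than the $2\max_j x_j$ that naive large-to-small greedy delivers (the weaker bound would only give $|L| \le 2M$). The tight bound follows by case split: either $|T - x_1| \le \sum_{j>1} x_j$, in which case take $\epsilon_1 = +1$ and recurse, or $|T + x_1| \le \sum_{j>1} x_j$, in which case take $\epsilon_1 = -1$ and recurse; at least one holds whenever $|T| \le \sum_j x_j$. A secondary subtlety is the effective choice of $n_s$ witnessing $\max_{|\tau|=n_s} M_s(\tau) 2^{-n_s} < 2^{-n_{s-1}-s}$, which is possible because $M_s$ is a computable rational martingale and its non-atomicity is inherited pointwise from $M$.
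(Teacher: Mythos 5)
Your construction is a genuinely different route from the paper's: you commit $L$ in blocks at levels $n_0 < n_1 < \cdots$, pick one residual string $\tau^\ast_\sigma$ per length-$n_{s-1}$ ancestor $\sigma$, set $L(\tau) = \pm M_s(\tau)$ on all other descendants at level $n_s$ via a signed-sum lemma, and fill in intermediate levels by averaging; the paper instead extends $L$ one bit at a time with explicit two-child formulas, and the ``few residuals per level'' structure emerges as an invariant (once $|L| = M_s$ at a node it stays saturated below). Both proofs use non-atomicity of $M$ to make the residual contribution vanish. Your block architecture, feasibility case split, the bound $|L(\sigma)| \le M_s(\sigma)$ for intermediate nodes, and the $V_L = M$ verification all check out.

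However, your proof of the signed-sum lemma has a gap. You claim that if $|T| \le \sum_j x_j$ then at least one of $|T - x_1| \le \sum_{j>1} x_j$ or $|T + x_1| \le \sum_{j>1} x_j$ holds, so the recursion on $x_2,\ldots,x_N$ can always proceed. Without an ordering assumption this is false: take $x_1 = 10$, $x_2 = 1$, $T = 0$. Then $|T| = 0 \le 11 = \sum_j x_j$, but $|T - x_1| = |T + x_1| = 10 > 1 = \sum_{j>1} x_j$, so the recursion stalls, even though the conclusion holds here (choosing $\epsilon_1 = -\epsilon_2$ gives error $9 \le 10 = \max_j x_j$). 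The fix is to process the $x_j$ in \emph{increasing} order: if $x_1 = \min_j x_j$ and $N \ge 2$ then $x_1 \le x_2 \le \sum_{j>1} x_j$, so for $T \ge 0$ we get $-\sum_{j>1} x_j \le -x_1 \le T - x_1 \le \sum_j x_j - x_1 = \sum_{j>1} x_j$ (and symmetrically for $T<0$ with $\epsilon_1 = -1$), and moreover $\max_{j>1} x_j = \max_j x_j$, so the error bound propagates correctly through the induction. With that ordering stated, the lemma and the rest of your argument go through.
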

		\begin{proof}
			Since $M$ is left-r.e., we may assume that $M(\sss)= \sup_s M_s(\sss)$ where each $M_s$ is a recursive martingale uniformly in $s$,
			sending strings to rational numbers, and $M_s(\sss) \leq M_t(\sss)$ whenever $s \leq t$.
			For natural numbers $a<b$ and  a string $\sigma$ of length $a$, define the approximation of $V_L$ on level $b$ by
			\[
				V_{L,b}(\sigma) = 2^{-(b-a)} \sum_{| \eta | = b-a} | L(\sigma \eta) |.
			\]
			Now one defines the new martingale $L$ inductively with $L(\varnothing) = 0$ where $\varnothing$ is the empty string.
			At stage $s$, the idea is to define $L$ for strings of lengths
			\[
				\ell_s+1, \ell_s+2, \ldots, \ell_{s+1}
			\]
			where
			$\ell_{s+1}$ will be chosen so that for all
			$\sss \in \{0,1\}^{\ell_s}$, the difference between $M_s(\sss)$ and $V_{L,\ell_{s+1}}(\sss)$
			is less than $2^{-s}$.

			One defines inductively for all strings of length $\ell_s, \ell_s+1, \ldots$
			the value of $L(\sss 0)$ and $L(\sss 1)$, using that
			$|L(\sss)| \leq M_s(\sss)$ and imposing the same on $L(\sss 0)$ and $L(\sss 1)$.
			Choose $a\in \{0,1\}$ such that $M_s(\sss a) \leq M_s(\sss (1-a))$ and
			define $L$ on $\sss 0$ and $\sss 1$ as follows:
			\begin{eqnarray*}
				L(\sss a) & = & \begin{cases}
				M_s(\sss a) & \text{if } L(\sss) \geq 0; \cr
				-M_s(\sss a) & \text{if } L(\sss) < 0; \cr \end{cases} \\
				L(\sss (1-a)) & = & \begin{cases}
				M_s(\sss a)+2 \cdot (L(\sss)-M_s(\sss a))
				& \text{if } L(\sss) \geq 0; \cr
				-M_s(\sss a)+2 \cdot (L(\sss)+M_s(\sss a))
				& \text{if } L(\sss) < 0. \cr \end{cases}
			\end{eqnarray*}
			Note that $L$ satisfies the martingale equality.
			If $L(\sss) \geq 0$ then
			\bc $L(\sss (1-a)) \leq M_s(\sss a)+2 \cdot (M_s(\sss)-M_s(\sss a)) =
			M_s(\sss (1-a))$ \ec
			and
			\bc $L(\sss (1-a)) \geq - M_s(\sss a) \geq -M_s(\sss (1-a))$. \ec
			Hence $|L(\sss a)| = M_s(\sss a) \leq M(\sss a)$
			and $|L(\sss (1-a))| \leq M_s(\sss (1-a)) \leq M(\sss (1-a))$ in this case.
			If $L(\sss) < 0$ then
			\bc $L(\sss (1-a)) \geq -M_s(\sss a)+2(-M_s(\sss) + M_s(\sss a)) =
			-M_s(\sss (1-a))$ \ec
			and
			\bc $L(\sss (1-a)) \leq M_s(\sss a) \leq M_s(\sss (1-a))$. \ec
			Again this implies $|L(\sss a)| = M_s(\sss a) \leq M(\sss a)$
			and $|L(\sss (1-a))| \leq M_s(\sss (1-a)) \leq M(\sss (1-a))$.

			Furthermore, note that whenever $L(\sss) = M_s(\sss)$ then
			$L(\sss 0) = M_s(\sss 0)$ and $L(\sss 1) = M_s(\sss 1)$;
			whenever $L(\sss) = -M_s(\sss)$ then
			$L(\sss 0) = -M_s(\sss 0)$ and $L(\sss 1) = -M_s(\sss 1)$.

			So one can show by induction that there are on each of the
			levels $\ell_s+1, \ell_s+2, \ldots$
			at most $2^{\ell_s}$ strings $\sss$ with
			$|L(\sss)| \neq M_s(\sss)$. As $M(\sss) = o(\tp{\sssl})$,
			there is some level $\ell_{s+1}$ such that for all $\sss \in \{0,1\}^{\ell_s}$
			the difference between $V_{L,\ell_{s+1}}(\sss)$ and $M_s(\sss)$ is at
			most $2^{-s}$. From this fact, one can conclude that for each string
			$\sss$, the difference between $M(\sss)$ and $V_{L,\ell_t}(\sss)$
			is for $\ell_t \geq |\sss|$ bounded by $M(\sss)-M_t(\sss)+2^{-t}$.
			So $V_L(\sss) = M(\sss)$ for all strings $\sss$.

			The condition $|L(\sss)| \leq M(\sss)$ can
			be verified by an easy induction using that $M_s(\sss) \leq M_{s+1}(\sss) \leq M(\sss)$
			for all~$s$.
		\end{proof}

		\begin{theorem} \label{thm:left-r.e._Lipschitz_variation}
			Let $f$ be a non-decreasing interval-r.e.\ function with Lipschitz constant $c$.
			Then there is a computable function $g$ with the same Lipschitz constant $c$ such that $f(x) = V(g, [0,x])$ for each $x\in [0,1]$.
		\end{theorem}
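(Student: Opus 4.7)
The plan is to convert $f$ into a non-atomic left-r.e.\ martingale, apply the previous lemma to obtain a signed computable martingale whose variation martingale is $M$, and then take $g$ to be the cumulative distribution function of the associated signed measure. Specifically, for a dyadic rational $0.\sigma$ with $\sss \in \strcantor$, let
\[
   M(\sss) = \tp{\sssl}\bigl(f(0.\sss + \tp{-\sssl}) - f(0.\sss)\bigr).
\]
I would verify that: (i) $M$ is a martingale (this follows from additivity $f(0.\sss + \tp{-\sssl}) - f(0.\sss) = [f(0.\sss 0 + \tp{-\sssl-1}) - f(0.\sss 0)] + [f(0.\sss 1 + \tp{-\sssl-1}) - f(0.\sss 1)]$); (ii) $M$ is left-r.e.\ uniformly in $\sss$ because $f$ is interval-r.e.; (iii) $0 \le M(\sss) \le c$ everywhere, since $f$ is non-decreasing and Lipschitz with constant $c$; and (iv) $M$ is non-atomic, because $M(\sss) \le c = o(\tp{\sssl})$.

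By Lemma~\ref{lem:left-r.e. and variation}, there is a computable signed rational-valued martingale $L$ with $V_L = M$ and $|L(\sss)| \le M(\sss) \le c$ for every~$\sss$. I would then define $g\colon [0,1]\to \R$ by setting, for each dyadic rational $k\tp{-n}$,
\[
  g(k\tp{-n}) = \tp{-n}\sum_{i<k} L(\sss_i),
\]
where $\sss_i$ is the length-$n$ binary expansion of $i\tp{-n}$. The martingale equality for $L$ makes this definition independent of the level $n$ at which the dyadic rational is represented, so $g$ is well-defined on dyadic rationals. Since $|L| \le c$, for dyadic $p<q$ we have $|g(q)-g(p)| \le c(q-p)$, so $g$ extends uniquely to a Lipschitz function on $[0,1]$ with constant $c$ and with $g(0)=0$. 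Computability of $g$ follows because the values $g(k\tp{-n})$ are rationals computable uniformly from $L$, combined with the Lipschitz bound.

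It remains to identify $V(g,[0,x])$ with $f(x)$. First I would handle dyadic intervals of the form $[0.\sss, 0.\sss + \tp{-\sssl}]$. On this interval $g$ is continuous, so by the remark in the introduction the variation equals the supremum of the sums associated to partitions by dyadic rationals. For level $\sssl + k$, the corresponding sum is
\[
  \sum_{|\eta|=k} \bigl|g(0.\sss\eta + \tp{-\sssl-k}) - g(0.\sss\eta)\bigr|
  = \tp{-\sssl-k}\sum_{|\eta|=k} |L(\sss\eta)|,
\]
and these sums increase in $k$ (refinement) with supremum $\tp{-\sssl} V_L(\sss) = \tp{-\sssl} M(\sss) = f(0.\sss + \tp{-\sssl}) - f(0.\sss)$. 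Hence
$V(g, [0.\sss, 0.\sss + \tp{-\sssl}]) = f(0.\sss + \tp{-\sssl}) - f(0.\sss)$. Summing over the $\tp n$ dyadic subintervals of level $n$ below a dyadic rational $x = k\tp{-n}$ and using additivity of variation yields $V(g,[0,x]) = f(x)$; continuity of both sides (in $x$) extends the equality to all $x\in [0,1]$.

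The main conceptual step is the passage from $f$ to $M$ and from $M$ to $L$; once the correct bookkeeping is set up, the identification $V(g,[0,\cdot]) = f$ is routine, with the only mildly delicate point being the equality $V(g,[0.\sss, 0.\sss+\tp{-\sssl}]) = \tp{-\sssl} V_L(\sss)$. That equality really relies on two ingredients provided by the construction: the continuity of $g$ (allowing restriction to dyadic partitions) and the monotonicity in $k$ of the dyadic-level-$k$ approximations to variation, which matches the definition of $V_L$ as a supremum.
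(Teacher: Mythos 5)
Your proposal is correct and follows essentially the same route as the paper's proof: define the martingale $M(\sss) = S_f(0.\sss,\, 0.\sss+\tp{-\sssl})$, apply Lemma~\ref{lem:left-r.e. and variation} to get a signed computable $L$ with $V_L = M$, take $g$ to be the discrete cdf of $L$, and verify $V(g,[0,x]) = f(x)$ on dyadics, extending by continuity. You are slightly more explicit than the paper in checking the non-atomicity hypothesis of Lemma~\ref{lem:left-r.e. and variation} (which the paper leaves implicit, since $M \le c = o(\tp{\sssl})$), but the argument is otherwise identical.
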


		\begin{proof}
			We define a left-r.e.\ martingale $M$ by $M(\sss) = S_f(0.\sss, 0.\sss+ \tp{-\sssl})$. Note that $M$ is bounded by the Lipschitz constant $c$ for $f$.
			Let $L$ be the signed computable rational-valued martingale with $V_L = M$ obtained through Lemma~\ref{lem:left-r.e. and variation}.
			Then $|L|$ is bounded by $c$ as well.

			For a dyadic rational $0.\sss$, $\sss$ a binary string, we let
			\[
				g(0.\sss)= \tp{-\sssl} \sum \{ L(\tau)\colon \, 0.\tau < 0.\sss \lland |\tau| = \sssl\}.
			\]
			Note that by the martingale equality, $g$ is well defined on the dyadic rationals in $[0,1)$. Clearly, for strings $\sss, \rho$ of the same length $n$, we have
			\[
				|g(0.\sss) - g(0.\rho)|
				\le \tp{-n} \sum \{ |L(\tau)|\colon \, 0.\rho \le 0.\tau < 0.\sss \lland |\tau| = n\}
				\le c| 0. \sss - 0. \rho |.
			\]
			Thus $g$ is Lipschitz on the dyadic rationals.
			Therefore by the remark after Definition~\ref{CompDef}, $g$ can be extended to a computable function on $[0,1]$ with Lipschitz constant $c$, also denoted $g$.

			We claim that $f(x) = V(g, [0,x])$ for each $x \in [0,1]$. By continuity of $f$ and $g$, we may assume
			that $x =0.\sss$ for string $\sss$ of length $n$, and that
			in the definition of $V(g, [0,x])$ we only consider partitions consisting of all the dyadic rationals $0.\rho < 0.\sss$,
			where all $\rho$ have the same length $k \ge \sssl$. Then
			\[
				V(g, [0,x]) = \tp{-n} \sum \{V_L(\tau) \colon \, 0.\tau < 0. \sss \lland |\tau| = n.\}
			\]
			Since $f(0)= 0$ we have
			\[
				f(x) = \tp{-n} \sum \{S_f(0.\tau, 0.\tau + \tp{-n}) \colon \, 0.\tau < 0. \sss \lland |\tau| = n \}.
			\]
			Since $M(\tau ) = S_f(0.\tau, 0.\tau + \tp{-n})$ and $M=V_L$, this establishes the claim.
		\end{proof}

		\n {\bf Extension to all continuous interval-r.e.\ functions.}
		After seeing our result, Jason Rute has extended the technique of
		Theorem~\ref{thm:left-r.e._Lipschitz_variation}, discarding the hypothesis that the given function be Lipschitz:

		\begin{theorem}[with Rute]\label{thm:Rute-extension}
			Let $f$ be a continuous non-decreasing interval-r.e.\ function.
			Then there is a computable function $g$ such that $f(x) = V(g, [0,x])$ for each $x\in [0,1]$.
		\end{theorem}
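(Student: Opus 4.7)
The plan is to follow the proof of Theorem~\ref{thm:left-r.e._Lipschitz_variation} closely, using non-atomicity of the associated martingale (provided by continuity of~$f$) as a substitute for the Lipschitz bound. First I would define the nonnegative martingale $M$ by $M(\sigma) = S_f(0.\sigma,\, 0.\sigma + 2^{-|\sigma|})$: non-negativity is given by $f$ being nondecreasing, the martingale equality by additivity of the increments of $f$, and left-r.e.-ness (uniformly in $\sigma$) by interval-r.e.-ness of $f$. Since $M(\sigma)/2^{|\sigma|} = f(0.\sigma + 2^{-|\sigma|}) - f(0.\sigma)$, uniform continuity of $f$ on $[0,1]$ yields $\sup_{|\sigma|=n} M(\sigma)/2^n \to 0$, so $M$ is non-atomic in the sense of Subsection~\ref{ss:martingales}.

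Next, apply Lemma~\ref{lem:left-r.e. and variation} to obtain a computable signed rational-valued martingale $L$ with $V_L = M$ and $|L(\sigma)| \le M(\sigma)$, and define $g$ on dyadic rationals exactly as in Theorem~\ref{thm:left-r.e._Lipschitz_variation}:
\[
g(0.\sigma) \;=\; 2^{-|\sigma|} \sum_{\substack{|\tau|=|\sigma|\\ 0.\tau < 0.\sigma}} L(\tau).
\]
Well-definedness follows from the martingale equality for $L$, and the values are computable rationals uniformly in $\sigma$. For dyadic $0.\rho \le 0.\sigma$ of common length $n$, telescoping together with $|L(\tau)| \le M(\tau)$ gives the key estimate
\[
|g(0.\sigma) - g(0.\rho)| \;\le\; 2^{-n}\!\!\sum_{\substack{|\tau|=n\\ 0.\rho \le 0.\tau < 0.\sigma}}\!|L(\tau)| \;\le\; f(0.\sigma) - f(0.\rho),
\]
which, in conjunction with classical uniform continuity of $f$, yields uniform continuity of $g$ on the dyadic rationals.

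The main obstacle is promoting this to an \emph{effective} modulus of continuity, so that $g$ extends to a computable function on $[0,1]$; the Lipschitz bound that supplied this for free in Theorem~\ref{thm:left-r.e._Lipschitz_variation} is now absent. I would overcome this by refining the construction inside Lemma~\ref{lem:left-r.e. and variation}: at stage~$s$ I would choose the new level $\ell_{s+1}$ not only to make $V_{L,\ell_{s+1}}$ close to $M_s$, but also to force $\max_{|\sigma|=\ell_{s+1}} M_s(\sigma)/2^{\ell_{s+1}} < 2^{-s}$. Such $\ell_{s+1}$ exists because $M_s \le M$ is dominated by the non-atomic $M$, and can be searched for effectively since $M_s$ is computable and rational-valued. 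Because the rest of the construction respects these bounds on the sizes of $L$-values relative to level, the sequence $n \mapsto \max_{|\sigma|=n} |L(\sigma)|/2^n$ is majorized by a computable sequence tending to $0$, which via the estimate above produces a computable modulus of continuity for $g$ on the dyadic rationals. By Definition~\ref{CompDef} and the remarks following it, $g$ then extends uniquely to a computable function on $[0,1]$.

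Finally, the identity $f(x) = V(g, [0, x])$ is verified exactly as in Theorem~\ref{thm:left-r.e._Lipschitz_variation}: at dyadic $x = 0.\sigma$, partitioning $[0, 0.\sigma]$ into dyadic subintervals of length $2^{-n}$ for $n \ge |\sigma|$ gives $V(g, [0, 0.\sigma]) = 2^{-n}\sum_{|\tau|=n,\, 0.\tau < 0.\sigma} V_L(\tau) = 2^{-n}\sum M(\tau) = f(0.\sigma)$; by continuity of $f$ and of $x \mapsto V(g, [0, x])$, the identity extends to all $x \in [0, 1]$.
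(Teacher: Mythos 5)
Your overall strategy is the same as the paper's: define $M$ via slopes of $f$, observe that continuity of $f$ gives non-atomicity of $M$, apply a refined version of Lemma~\ref{lem:left-r.e. and variation}, define $g$ as before, and overcome the missing effective modulus by imposing an extra constraint on the choice of levels $\ell_{s+1}$. However, your specific constraint is off by a shift in the stage index. You ensure $\max_{|\sigma|=\ell_{s+1}} M_s(\sigma)/2^{\ell_{s+1}} < 2^{-s}$, i.e.\ you control the martingale $M_s$ you have just finished using. But during stage $s+1$ (levels $> \ell_{s+1}$) the construction bounds $|L(\sigma)|$ by $M_{s+1}(\sigma)$, and since $M_{s+1} \ge M_s$, a bound on $M_s$ at level $\ell_{s+1}$ gives nothing about $M_{s+1}$ there. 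The paper instead looks \emph{ahead}: it delays the switch from $M_{s-1}$ to $M_s$ until the level is past a threshold $k_{s+1}$ chosen so that $2^{-|\sigma|} M_{s+1}(\sigma) \le 2^{-(s+1)}$ for all $|\sigma| \ge k_{s+1}$; because $M_s \le M_{s+1}$, this also bounds $M_s$ throughout stage $s$. With your constraint, $\max_{|\sigma|=n} |L(\sigma)|/2^n$ need not be controlled at the levels inside stage $s+1$, so the asserted computable majorant does not exist.

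There is a second gap: even with the corrected constraint, the step from ``$\max_{|\sigma|=n} |L(\sigma)|/2^n$ is majorized by a computable null sequence'' to ``computable modulus of continuity for $g$'' is not immediate. A crude estimate (number of level-$n$ strings in a level-$m$ interval, times the max) gives a bound of the form $2^{n-m}\epsilon_n$, which need not tend to zero since $\epsilon_n$ can decay much more slowly than $2^{-n}$; and the bound $|g(0.\sigma) - g(0.\rho)| \le f(0.\sigma) - f(0.\rho)$ involves the non-computable modulus of $f$. The paper's verification of computability of $g$ uses a genuinely additional idea: for $a\in[\sigma]$ with $|\sigma|=j_s$, decompose $[0.\sigma, a)$ into dyadic intervals at level $j_{s+1}$, use finite additivity of the approximating measure $\mu_{s+1}$ to get $\sum_i |\nu(\tau_i)| \le \mu_{s+1}(\sigma) \le 2^{-(s+1)}$, then recurse at levels $j_{s+2}, j_{s+3}, \ldots$, summing the tail to $2^{-s}$. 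Your proposal does not reproduce this telescoping, and without it the computability of $g$ remains unjustified.
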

		\begin{proof}
			Suppose that $M$ is the left-r.e.~martingale associated with $f$. It is not enough to just do what we did before.
			The problem is that given a signed martingale $L$, the function $g$
			is not necessarily computable. (Here, although $L$ can be negative, it is correct to write $g=\mathsf{cdf}(L)$.) This can be fixed by being a bit more
			careful about which $L$ we construct in
			Lemma~\ref{lem:left-r.e. and variation}.

			To construct $L$ from $M$, we follow the proof of
			Lemma~\ref{lem:left-r.e. and variation}, with one adjustment.
			Note that for each $M_{s}$ in the proof of Lemma~\ref{lem:left-r.e. and variation}, there is some stage $k_{s}$
			in which $2^{-|\sigma|}\cdot M_{s}(\sigma)\leq2^{-s}$ for all $|\sigma|\geq k_{s}$.
			
			Indeed, $M_{s}$ has no atoms, hence for each $x$, $2^{-k}\cdot M_{s}(x\uhr k)\searrow0$
			as $k\rightarrow\infty$. In particular, for each $x$ there is a $k$ such that
			$2^{-k}\cdot M_{s}(x\uhr k)\leq 2^{-s}$.
			By compactness, there is in fact a single  $k$ such that for all $x$,
			$2^{-k}\cdot M_{s}(x\uhr k)\leq 2^{-s}$.

			Do not switch from $M_{s-1}$
			to $M_{s}$ in the construction until after stage $k_{s+1}$ (we can
			assume $M_{0}=M_{1}=0$).

			Let $j_{s}$ be the stage at which we switch to $M_{s}$. Clearly, $j_{s}\geq k_{s+1}$.
			So for any $\sigma$ such that $j_{s+1}>|\sigma|\geq j_{s}$ we have
			$|L(\sigma)|\leq M_{s}(\sigma)$ by construction. Therefore,
			\[
				2^{-|\sigma|}\cdot|L(\sigma)|\leq2^{-|\sigma|}\cdot M_{s}(\sigma)\leq2^{-|\sigma|}\cdot M_{s+1}(\sigma)\leq2^{-(s+1)}.
			\]
			Let $g=\mathsf{cdf}(L)$.
			By the same proof as in Theorem \ref{thm:left-r.e._Lipschitz_variation}, $f(x)=V(g,[0,x])$.

			It remains to show that $g$ is computable. Clearly, $g(0.\sigma)$ is uniformly computable for all $\sigma$. Let $\nu$
			be the signed measure associated with $L$, so that
			\[
				\nu([0.\sigma, 0.\sigma + 2^{-|\sigma|})) = 2^{-|\sigma|} L(\sigma)\quad\text{and}\quad\mathsf{cdf}(\nu)=\mathsf{cdf}(L).
			\]
			Also, for each $s$, let $\mu_{s}$ be the measure associated with
			$M_{s}$. The above formula then becomes (writing $\nu(\sigma)$ for $\nu([0.\sigma + 2^{-|\sigma|})$)
			\[
				|\nu(\sigma)|\leq\mu_{s}(\sigma)\leq\mu_{s+1}(\sigma)\leq2^{-(s+1)}
			\]
			for $j_{s+1}>|\sigma|\geq j_{s}$. Pick $a\in[0,1]$.
			To compute $g(a)$ within $2^{-(s-1)}$ uniformly from $a$, let $\sigma = a \upharpoonright j_s$ (i.e., $a\in [\sigma]$ and $|\sigma|=j_s$). From the
			Cauchy name for $a$, one can determine one of the values $\{ g(0.\sigma),
			g(0.\sigma + 2^{-j_s}) \}$. Notice that
			\[
				|g(0.\sigma + 2^{-j_s}) - g(0.\sigma)| = |\nu(\sigma)| \leq 2^{-(s+1)}.
			\]
			We claim that $|g(a) - g(0.\sigma)| \leq 2^{-s}$.

			The claim is equivalent to saying that $|\nu[0.\sigma,a)|\leq2^{-s}$.
			We know that $[0.\sigma,a)\subseteq[\sigma]$. Now, break up $[0.\sigma,a)$
			into $[\tau_{n}]\cup\cdots\cup[\tau_{1}]\cup[0.\tau_{0},a)$, where
			$\tau_{n},\ldots,\tau_{1},\tau_{0}$ are adjacent, $|\tau_{i}|=j_{s+1}$
			for $1\leq i\leq n$, and $\tau_{0}=a\upharpoonright j_{s+1}$. Since
			$|\sigma|=j_{s}$ and $|\tau_{i}|=j_{s+1}$, we have
			\begin{align*}
				|\nu[0.\sigma,a)| & \leq|\nu(\tau_{n})|+\cdots+|\nu(\tau_{1})|+|\nu[0.\tau_{0},a)|\\
				& \leq\mu_{s+1}(\tau_{n})+ \cdots + \mu_{s+1}(\tau_{1})+|\nu[0.\tau_{0},a)|\\
				& \leq\mu_{s+1}(\sigma)+|\nu[0.\tau_{0},a)|\\
				& \leq2^{-(s+1)}+|\nu[0.\tau_{0},a)|.
			\end{align*}
			Continuing by recursion, we have for each $m$ that
			\[
				|\nu[0.\sigma,a)|\leq |\nu[0.\tau_{0}^{m},a)| + \sum_{i=1}^{m} 2^{-(s+i)}
			\]
			where $\tau^m_0 = a \upharpoonright j_{s+m}$. Let $\mu$ be the measure associated
			with $M$. Then
			\[
				|\nu[0.\tau_{0}^{m},a)|\leq\mu[0.\tau_{0}^{m},a)\rightarrow0\quad\text{as}\quad m \rightarrow \infty,
			\]
			and hence $|\nu[0.\sigma,a)|\leq\sum_{i=1}^{\infty}2^{-(s+i)}=2^{-s}$.
		\end{proof}

	\section{Computable randomness and Lipschitz functions}
		\subsection{Characterizing computable randomness}
			\n
			Schnorr \cite{Schnorr:75} introduced the following notion.
			\begin{definition}\label{df:CR}
				{\rm A sequence of bits $Z$ is called \emph{computably random} if no computable martingale succeeds on $Z$.
				A real $z \in [0,1]$ is called \emph{computably random} if  a   binary expansion of $z$  is computably random.}
			\end{definition}
			Note that we can ignore the case that there are two binary expansions of $z$, because in that case $z$  is a dyadic rational and hence computable. 
			Also, it   suffices to require that no rational-valued computable martingale succeeds on a binary expansion of $z$
			by a result of \cite{Schnorr:75} (for a recent reference see \cite[7.3.8]{Nies:book}).
			Here computability of the martingale can even be taken in the strong sense that we can uniformly compute a canonical index for the rational value (rather than
			give a sequence of approximations that happen to converge to a rational value).
			For more background on computable randomness see \cite[Ch.~7]{Nies:book} or \cite{Downey.Hirschfeldt:book}.

			We now characterize computable randomness by differentiability of computable Lipschitz functions,
			analogously to Theorem~\ref{thm:Brattka} due to \cite{Brattka.Miller.ea:nd} mentioned in the introduction.

				\begin{theorem} \label{thm:comprd_Lipschitz}
				Let $z\in [0,1]$. Then
				$z$ is computably random $\LR$ 
				
				\hfill
each computable Lipschitz function $f \colon \, [0,1] \ria \mathbb R$ is differentiable at~$z$.
			\end{theorem}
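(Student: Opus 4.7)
The plan is to handle the two implications separately.

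\emph{Forward direction.} Let $f$ be a computable Lipschitz function with constant $c$. Write $f(x) = g_1(x) - g_2(x)$ with $g_1(x) = f(x) + cx$ and $g_2(x) = cx$. Both are computable, and $g_1$ is nondecreasing because the slopes of $f$ lie in $[-c,c]$. By Theorem~\ref{thm:Brattka}, both $g_1$ and $g_2$ are differentiable at any computably random $z$, and hence so is $f$.

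\emph{Reverse direction, by contrapositive.} Suppose $z$ is not computably random, with binary expansion $Z$. By Schnorr's theorem there is a computable rational-valued positive martingale $N$ with $N(Z\uhr n)\to\infty$. The aim is to construct a bounded computable signed martingale $L\colon \strcantor\to\R$, with $|L(\sigma)|\le c$ for all $\sigma$, such that the sequence $L(Z\uhr n)$ fails to converge. Granted such an $L$, set $g(x) = \Fcn(L+c)(x)$. Since $L+c$ is a positive martingale taking values in $[0,2c]$, Lemma~\ref{lem:MartLipschitz} gives that $g$ is computable, nondecreasing, and Lipschitz with constant $2c$. Its dyadic slopes at $z$ satisfy
\[
\tp{n}\bigl(g(0.\sigma_n+\tp{-n})-g(0.\sigma_n)\bigr)\;=\;L(\sigma_n)+c, \qquad \sigma_n=Z\uhr n,
\]
which by hypothesis does not converge. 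A weighted-average computation shows that if $g$ were differentiable at $z$, any sequence of intervals shrinking to $z$ would force the slopes to tend to $g'(z)$; hence $\underline{D}g(z)<\overline{D}g(z)$ and $g$ is not differentiable at $z$, as required. (The case that $z$ is dyadic is handled trivially, since then $z$ is computable and the tent function $x\mapsto |x-z|$ is computable Lipschitz and non-differentiable at $z$.)

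\emph{Construction of $L$.} The idea is to use $N$'s successes as a clock that alternates the sign of $L$'s bets. After the savings trick we may assume $N(Z\uhr n)$ is nondecreasing along $Z$. Partition the tree into \emph{eras} according to $k(\sigma) = \lfloor\log_2 N(\sigma)\rfloor$, and prescribe the bet $b_L(\sigma) = L(\sigma 1) - L(\sigma)$ by
\[
b_L(\sigma) \;=\; (-1)^{k(\sigma)}\cdot c\cdot \frac{N(\sigma 1)-N(\sigma)}{N(\sigma)}.
\]
Then $|b_L|\le c$ pointwise, and along $Z$ the total $\log_2 N$-increment inside era $k$ equals $1$, so the cumulative change in $L$ inside era $k$ telescopes to approximately $(-1)^k c$. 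Therefore $L(Z\uhr n)$ alternates between neighborhoods of $0$ and $c$ and, in particular, does not converge.

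\emph{Main obstacle.} The difficulty is to ensure that $|L(\sigma)|\le c$ for \emph{all} $\sigma\in\strcantor$, not merely along $Z$: off the path $Z$ the martingale $N$ may grow or shrink in ways that let the partial sums of $b_L$ exceed $c$. To circumvent this I would route the construction through Lemma~\ref{lem:left-r.e. and variation}: first build a suitable bounded left-r.e.\ martingale $M^{\ast}$ whose values along $Z$ encode the era structure above, and then apply the lemma to produce a signed computable $L$ with $|L|\le M^{\ast}$ and $V_L=M^{\ast}$. The sign-choice phase in the proof of Lemma~\ref{lem:left-r.e. and variation} is flexible enough that one can steer the signs of $L(Z\uhr n)$ to flip at each era boundary. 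Verifying that the signs emerging from this adapted lemma actually produce oscillation of $L(Z\uhr n)$ (rather than drifting to a single value) is the most delicate bookkeeping step, and is where the interaction between the stages $M^{\ast}_s$ of the left-r.e.\ approximation and the era decomposition along $Z$ must be carefully coordinated.
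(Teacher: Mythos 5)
Your forward direction is exactly the paper's argument (add $cx$ to make the function nondecreasing, invoke Theorem~\ref{thm:Brattka}), and your reduction of the converse to the statement ``build a computable martingale with values in a bounded interval that oscillates along the binary expansion $Z$, then take $\Fcn$ of it and use the weighted-average lemma on dyadic intervals containing $z$'' is also the same strategy the paper adopts. What differs, and where the proposal has a real gap, is the construction of the oscillating bounded martingale.

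Your explicit formula $b_L(\sigma) = (-1)^{k(\sigma)}\,c\,(N(\sigma 1)-N(\sigma))/N(\sigma)$ does not work as written, and for more reasons than the one you flag. Besides the acknowledged problem of controlling $|L(\sigma)|$ off the path $Z$, the per-era increment of $L$ along $Z$ need not be close to $(-1)^k c$: the telescoped sum $\sum (N(\sigma a)-N(\sigma))/N(\sigma)$ over an era approximates $\ln 2$ only when the individual steps of $N$ are small, and when $N$ jumps across several powers of two in one step the era is essentially skipped and the cumulative $L$-change can be far off. Your proposed repair---routing through Lemma~\ref{lem:left-r.e. and variation} and ``steering the signs''---is not a realistic fix: in that lemma the sign of $L$ becomes locked in as soon as $|L(\sigma)|$ touches $M_s(\sigma)$ (the construction then forces $L(\sigma a) = \pm M_s(\sigma a)$ with the inherited sign), so you do not in fact have the freedom to flip signs at prescribed era boundaries without rewriting the lemma from scratch. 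The paper instead builds the bounded oscillating martingale $B$ directly: normalize $M$ to be rational-valued and to satisfy the savings property $M(\sigma\eta)\ge M(\sigma)-1$, start $B$ at $2$, and alternate between an \emph{up phase}, where $B$ tracks the increments of $M$ and is capped at $3$, and a \emph{down phase}, where it tracks the negated increments of $M$ and is floored at $2$; the savings property then gives the global bound $B(\tau)\in[1,4]$, while unbounded growth of $M$ along $Z$ forces infinitely many phase switches and hence oscillation of $B(Z\uhr n)$ between values $\le 2$ and $\ge 3$. This up/down-phase construction with the savings trick is the missing idea in your proposal.
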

			\begin{proof}[Proof of Theorem \ref{thm:comprd_Lipschitz}]
				\rapf Suppose $f$ is a computable Lipschitz function with a Lipschitz bound $c \in \NN$. As observed above, the function $g(x)= f(x) + cx$ is nondecreasing.
				Since $g$ is computable, by \cite[Thm.\ 4.1]{Brattka.Miller.ea:nd}, $g'(z)$, and hence $f'(z)$, exists.

				\vsps

				\n \lapf  We may assume that~$z$ is not a dyadic rational. Suppose $z$ is not computably random, so  some computable martingale $M$ succeeds on the binary expansion $Z$ of  $z$.
			We will build a computable Lipschitz function~$f$ that is not differentiable at $z$.

			We let  $f = \mathsf{cdf} ( B)$ as defined in Subsection~\ref{ss:martingales}, for   a computable bounded martingale $B$
			that oscillates between     values  $ \ge 3$  and values $\le 2$ when processing longer and longer initial segments of the binary expansion of $z$.
			We build $B$ from $M$.
			Note that for any string $\sss$, the value $B(\sss)$ is the slope of $f$ between the dyadic rationals $0. \sss$ and $0. \sss + \tp{-\sssl}$,
			so $f$ is not differentiable at $z$.
			The argument in the usual proof of the Doob martingale convergence theorem
			(see, e.g., \cite{Durrett:96}) turns oscillation of a martingale into success of another martingale.
			In a sense, we reverse this argument, turning the success of $M$  into oscillation of $B$.
			
					As mentioned above, we may assume that $M$ only takes positive rational values which can be computed in a single output from the input string.     We may also assume that $M$ has the savings property 
					\bc $M (\sss \eta) \ge M(\sss ) -1$ for each strings $\sss, \eta$; \ec 
					see e.g.\ Exercise~\cite[7.1.14]{Nies:book} and its solution, or \cite{Downey.Hirschfeldt:book}.
					At each $\sss$, the martingale $B$ is in one of two possible phases.
In the \emph{up phase}, it  adds the capital that $M$ risks, until its value $B(\sss)$ reaches $3$ (if this value would exceed $3$, $B$   adds   less in order to ensure the value equals $3$). In
the \emph{down phase}, $B$   subtracts the capital that $M$ risks, until the value $B(\sss)$ reaches $2$.

In more detail, the construction of $B$ is as follows.  Inductively we show that if $B$ is in the up phase at $\sss$, then  $B(\sss)<3$, and if $B$ is in the down phase at $\sss$ then $B(\sss) > 2$.  
At the empty string $\emptyset$, the martingale $B$ is in the up phase and $B(\emptyset) =2$.   Thus the inductive condition holds at the empty string. 
Suppose now that  $B(\sss)$ has been defined.

\vsp

\n {\em Case 1: $B$ is in the up phase at $\sss$.}   Let  $$r_k = B(\sss) + M(\sss k) - M(\sss). $$
\n
 If $r_0, r_1 < 3$ then let $B(\sss k) = r_k$; stay in the up phase at both $\sss 0$ and $\sss 1$.
Otherwise, since $M$ is a martingale and $B(\sss)< 3$, there is a unique $k$ such
that $r_k \ge 3$. Let $B(\sss k) = 3$ and $B(\sss (1-k)) = 2B(\sss) -3$. 
Go into the down phase at $\sss k$, but stay in the up phase at $\sss (1-k)$. Note that the inductive condition is maintained at both $\sss 0$ and $\sss 1$.  

\vsp 

\n {\em Case 2: $B(\sss)$ is in the down phase.}  Let  $$r_k = B(\sss) -  (M(\sss k) - M(\sss)). $$
\n
 If $r_0, r_1  > 2 $ then let $B(\sss k) = r_k$; stay in the up phase at both $\sss 0$ and $\sss 1$.
Otherwise, since $M$ is a martingale and $B(\sss)>2$, there is a unique $k$ such
that $r_k \le 2$. Let $B(\sss k) = 2$ and $B(\sss (1-k)) = 2B(\sss) -2$. 
Go into the up  phase at $\sss k$, but stay in the down  phase at $\sss (1-k)$. The inductive condition is maintained at both $\sss 0$ and $\sss 1$.  

\begin{claim} For each string $\tau$ we have  $1 \le B(\tau) \le 4$. \end{claim}
To see this, suppose first that $B$ is in  the up phase at $\tau$. Then $B(\tau) \le 3$. For the lower bound on $B(\tau)$, suppose  that $B$ entered the up phase at the  string $\sss \preceq \tau$ with  $\sssl$  maximal.  Then $B(\sss) =2$. By the savings property we have $M(\tau ) - M(\sss) \ge -1$. Therefore $B(\tau)  = B(\sss) + M(\tau) - M(\sss) \ge 1$.

Next suppose   that $B$ is in  the down phase at $\tau$. Then $B(\tau) \ge 2$. For the upper bound on $B(\tau)$, suppose that  $B$ entered the down  phase at the  string $\sss \preceq \tau$ with  $\sssl$ maximal.  Then $B(\sss) =3$. By the savings property we have  $B(\tau)  = B(\sss) - ( M(\tau) - M(\sss)) \le 4$.
This shows the claim.

Since $M$ succeeds on the binary expansion  $Z$ of $z$, it is clear that $B$ oscillates  along $Z$  as described above.  
\end{proof}
	\section{Schnorr randomness in $[0,1]^n$ and Lipschitz functions}\label{s:Schnorr}
		\subsection{Schnorr randomness} \label{ss:Schnorr}
			We let $\leb$ denote Lebesgue measure on $[0,1]^n$.
			For an introduction to algorithmic randomness in spaces other than Cantor space and $[0,1]$, see \cite{Hoyrup.Rojas:09}.
	We say that  $G \sub [0,1]^n$ is $\SI 1$ if $G = [0,1]^n \cap H$ where $H\subseteq\mathbb R^n$ is an effective union of open cubes with rational coordinates.
			A uniformly $\SI 1$ sequence $(G_m)\sN m$ is called {\it Schnorr test} if $\leb G_m \le \tp{-m}$ and $\leb G_m$ is a computable real uniformly in $m$.
			A point $z \in [0,1]^n$ is called \emph{Schnorr random} if $z \not \in \bigcap_m G_m$ for each Schnorr test $(G_m)\sN m$.

		\subsection{Characterizing Schnorr randomness}
			We characterize Schnorr randomness of a real by being a {weak} Lebesgue point of each bounded $L_1$-computable function.
			As a corollary, we obtain a characterization in terms of differentiability at the real of all Lipschitz functions that are computable in the $p$-variation norm for a fixed computable real $p\ge 1$.
			(See the introduction for more background and the relationship of this result to \cite{Pathak.Rojas.ea:12}.)
			\begin{theorem}\label{thm:Lp_Schnorr}
	 Let $z \in [0,1]^n$. Then
				$z$ is Schnorr random $\LR$ 
				
				\hfill 	
				$z$ is a weak Lebesgue point of each $L_1$-computable
				boun\-ded function $g\colon [0,1]^n \to \RR$.
			\end{theorem}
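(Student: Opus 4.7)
The proof splits into two directions. For the forward direction $(\Rightarrow)$, if $z$ is Schnorr random then $z$ is a weak Lebesgue point of every $L_1$-computable function by Theorem~\ref{Pathak.Rojas.eaR}, hence of every bounded such. A self-contained proof proceeds by approximating $g$ in $L_1$ by computable functions $h_k$ with $\|g-h_k\|_1\le 8^{-k}$; the $\SI 1$ sets
\[
U_k=\bigl\{x\in [0,1]^n:\exists\text{ rational cube } Q\ni x,\ (\leb Q)^{-1}\textstyle\int_Q|g-h_k|>2^{-k}\bigr\}
\]
have measure $O(4^{-k})$ by the Hardy--Littlewood weak-type $(1,1)$ inequality, and since $|g-h_k|$ is $L_1$-computable the integrals over rational cubes are uniformly computable, so $(U_k)$ can be padded into a Schnorr test. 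For Schnorr random $z$, $z\notin U_k$ for all large $k$, and continuity of $h_k$ at $z$ then makes the averages of $g$ over small cubes containing $z$ a Cauchy net.

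The reverse direction $(\Leftarrow)$ is the core of the theorem. Given a Schnorr test $(G_m)_{m\in\NN}$ with $z\in\bigcap_m G_m$ and $\leb G_m$ uniformly computable, my plan is to construct a bounded $L_1$-computable $g$ together with cubes $C_m\ni z$ with $\leb C_m\to 0$ such that
\[
(\leb C_m)^{-1}\int_{C_m} g=(-1)^m\bigl(1-o(1)\bigr),
\]
which is the quantitative property that distinguishes this construction from~\cite{Pathak.Rojas.ea:12}. The first step is to refine the test: pass to a subsequence so that $\leb G_m\le c^{-m}$ for a large constant $c$, and replace each $G_m$ by a finite computable union $H_m$ of dyadic cubes of a common side $2^{-k_m}$ inside $G_m$, chosen inductively so that $H_{m+1}\sub H_m$, so that each defining cube of $H_m$ lies entirely in $H_m$ by construction, and so that for every $z\in\bigcap_m G_m$ one has $z\in H_m$ for all large $m$. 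This last property is secured by taking $k_m$ large enough and extracting $H_m$ from a sufficiently deep stage of the given enumeration of $G_m$.

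With the refined sequence $(H_m,k_m)$ in hand, define
\[
g=\sum_{m=0}^\infty (-1)^m \mathbf 1_{H_m\setminus H_{m+1}}.
\]
Nesting makes the supports pairwise disjoint, so $|g|\le 1$; each partial sum is a computable step function and the tail is bounded in $L_1$ by $\leb H_{m+1}\le c^{-m-1}$, so $g$ is bounded and $L_1$-computable. For our specific $z$, take $m$ so large that $z\in H_m$, and let $C_m$ be the dyadic cube of side $2^{-k_m}$ from the defining union of $H_m$ that contains $z$. Nesting gives $C_m\cap(H_k\setminus H_{k+1})=\emptyset$ for $k<m$; the $k=m$ term contributes $(-1)^m\leb(C_m\setminus H_{m+1})$; and the tail $k>m$ contributes at most $\leb H_{m+1}\le c^{-m-1}$. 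Choosing $c$ large enough that $c^{-m-1}=o(2^{-nk_m})=o(\leb C_m)$ yields the desired $(-1)^m(1-o(1))$ average, and these averages oscillate between values near $+1$ and $-1$ along $(C_m)$ shrinking to $z$, so $z$ is not a weak Lebesgue point of $g$.

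The principal obstacle is the refinement step. Simply intersecting $G_0\cap\cdots\cap G_m$ gives nesting but loses computability of measure, and a naive dyadic approximation from within may fail to contain $z$ at any finite stage. The construction must therefore interleave stage-approximations of the given test with a growing dyadic grid $2^{-k_m}$, ensuring simultaneously nesting, the dyadic-cube decomposition, computability of measure, fast decay of $\leb H_m$, and eventual capture of every $z\in\bigcap_m G_m$. It is this careful coordination that secures the $1-o(1)$ oscillation bound, which is strictly stronger than the mere non-convergence exhibited in~\cite{Pathak.Rojas.ea:12}.
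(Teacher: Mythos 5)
Your forward direction is fine (the paper also just cites Pathak--Rojas--Simpson), and your high-level plan for the reverse direction---build an alternating-sign indicator function from a suitably refined version of the given Schnorr test, and read off oscillation of cube-averages along cubes shrinking to $z$---matches the paper's. But the refinement step, which you yourself flag as ``the principal obstacle,'' has a gap that your proposal does not resolve and that in fact cannot be resolved in the form you describe.

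You require the refined sets $H_m$ to be \emph{finite} unions of dyadic cubes, nested ($H_{m+1}\subseteq H_m$), with $\leb H_m\to 0$, and such that every $z\in\bigcap_m G_m$ satisfies $z\in H_m$ for all large $m$. Given nesting, ``$z\in H_m$ for all large $m$'' already implies ``$z\in H_m$ for all $m$'' (if $z\in H_{m_0}$ then $z\in H_m$ for every $m\le m_0$ as well). So you need $\bigcap_m G_m\subseteq\bigcap_m H_m$. But each $H_m$ is a finite union of closed cubes, hence closed, and $\bigcap_m H_m$ is closed. The set $\bigcap_m G_m$ (equivalently $\bigcap_m V_m$, which you do not control) can be a \emph{dense} null $G_\delta$ set: one can always enlarge each $V_m$ by throwing in the rationals without changing its measure. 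If $\bigcap_m G_m$ is dense, then $\bigcap_m H_m\supseteq\overline{\bigcap_m G_m}=[0,1]^n$, contradicting $\leb H_m\to 0$. So no amount of ``careful interleaving'' of a growing dyadic grid with stage-approximations can make finite nested $H_m$ eventually capture every point of $\bigcap_m G_m$. Put differently, a cube of $V_m$ containing a particular $z$ may enter the enumeration of $V_m$ arbitrarily late, so any computable finite truncation misses it for some $m$, and nesting then locks $z$ out of all later $H_{m'}$.

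The paper sidesteps this by \emph{not} truncating: its refined sets $G_m$ are honest infinite $\Sigma^0_1$ sets (hence contain all of $\bigcap_m V_m$), and the delicate part is showing $\leb G_m$ is nonetheless uniformly computable. This is done by an adaptive, cube-by-cube construction: when a dyadic cube $C$ is enumerated into $G_m$ at stage $s$, one chooses $r\ge s$ with $2^{-r}\le 2^{-m-1}\leb C$ and enumerates $V_r\cap C$ into $G_{m+1}$. Nesting holds because each new piece lives inside a cube of $G_m$; the measure is computable because the cubes of $G_m$ that arrive late have small total measure (by computability of $\leb G_m$, inductively) and each spawns a $V_r\cap C$ of computable measure. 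This adaptive mechanism is exactly the missing idea; your proposal correctly identifies the constraints that need to be met simultaneously but does not supply a construction that meets them, and the particular shape you propose (finite nested unions) is provably impossible. As a side note, your choice $g=\sum(-1)^m\mathbf 1_{H_m\setminus H_{m+1}}$ is an affine rescaling of the paper's $g=\sum(-1)^m\mathbf 1_{G_m}$ (on nested sets they differ by $2g_{\mathrm{paper}}-1$), so the $\pm 1$ oscillation you aim for is not a new ingredient but a cosmetic normalization; the substance of the argument is entirely in the refinement.
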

			\begin{proof}
				\n \rapf This is immediate by \cite[Thm.\ 1.6]{Pathak.Rojas.ea:12}.
				They show that in fact, $z$ is a weak Lebesgue point of each $L_1$-computable function, bounded or not.

				\n \lapf This implication is proved by contraposition 
				Suppose $z \in [0,1]^n$ is not Schnorr random.
				We   build a bounded $L_1$-computable  function $g\colon [0,1]^n \to \RR$,
				and a sequence of dyadic cubes $C_m \downarrow z$, such that
				\begin{equation} \label{eqn:ours_is_better}
					\limsup_m \frac{\int_{C_m} g}{ \leb C_m } =1 \quad\text{and}\quad\liminf_m \frac{\int_{C_m} g}{ \leb C_m } = -1.
				\end{equation}
			Then  $z$ is not a weak Lebesgue point of  $g$.

				Recall that $\+Q$ denotes the subset of $[0,1]^n$ consisting of the vectors with a dyadic rational component.
				Clearly  $g$ exists for $z \in \+Q$, so we may assume that $z \not \in \+ Q$.
				In the following all assertions of inclusion relations and disjointness for subsets of $[0,1]^n$ are meant to hold only on $[0,1]^n \setminus \+ Q$.

				Let $(V_m)\sN m$ be a Schnorr test in $[0,1]^n$ such that $z \in \bigcap_m V_m$. We will modify $(V_m)\sN m$ to obtain a new Schnorr test
				$(G_m)\sN m$ with $\bigcap_m V_m \sub \bigcap_m G_m $,
				and in particular $z \in \bigcap_m G_m$ (using that $z \not \in \+ Q$).
				Thereafter we will show that the bounded function $g$ defined by $g(x) = 0$ for $x \in \bigcap_m G_m$ and
				\begin{equation}
					\label{eqn:g_function_def}
					g(x)= \sum_{m=0}^\infty (-1)^m 1_{G_m}(x)
				\end{equation}
				for $x \not \in \bigcap_m G_m$ is as required.

				Recall from Subsection~\ref{ss:dyadic_cubes} that at each stage $t$ we have a set
				$V_{m,t}$ which is a finite union of dyadic cubes that are disjoint (outside $\+ Q$).

				\vsps

				\n {\it Construction of the Schnorr test $(G_m)\sN m$.} Set $G_{0,s} = [0,1]^n$ for each $s$.
				Suppose inductively we have defined a computable enumeration $(G_{m,s})_{s\in\NN}$ of $G_m$.
				Suppose that a dyadic cube $C$ is enumerated into $G_{m,s}$.
				Let $r\ge s$ be least such that
				\begin{equation} \label{eqn:smaller_cubes}
					2^{-r}
					\le 2^{-m-1}\leb C.
				\end{equation}
				(this will be used to show that $z$ is not a weak Lebesgue point of $g$). Enumerate the set $V_r \cap C$ into $G_{m+1}$. In more detail, for all $t\ge s$ enumerate
				the set $V_{r,t} \cap C$ into $G_{m+1,t}$. This ends the construction.

				Note that because $C$ is disjoint from $G_{m,s-1}$, the set $V_r \cap C$ is also disjoint from $G_{m+1,s-1}$;
				this will be needed when we verify that the reals $\leb G_m$ are uniformly computable.

				\begin{claim}
					We have $\bigcap_{r\in\NN} V_r \subseteq G_m$ for each $m$.
				\end{claim}
				\n We verify the claim by induction on $m$. Clearly $\bigcap_{r\in\NN} V_r \subseteq G_0 = [0,1]^n$.
				Inductively suppose that $\bigcap_{r\in\NN} V_r \subseteq G_{m}$.
				Thus every point in the set $\bigcap_{r\in\NN} V_r \setminus \+ Q$ is in some cube $C$ enumerated into $G_{m}$.
				Then by construction we have $\bigcap_{r\in\NN} V_r \cap C \sub G_{m+1}$. This shows the claim.

				We now verify that $(G_m)\sN m$ is a Schnorr test. Clearly (\ref{eqn:smaller_cubes}) implies that $\leb G_m \le \tp{-m}$ for each $m$.
				\begin{claim}\label{cl:Gm_comp_measure}
					$\lambda(G_m)$ is a computable real uniformly in $m$.
				\end{claim}
				Note that $\lambda(G_0) = 1$.
				Inductively suppose we have a procedure to compute the real $\leb G_m$.
				Given a rational $\epsilon >0$, we will (uniformly in $m$) compute a $t$ such that $\lambda(G_{m+1} \setminus G_{m+1,t}) <2 \epsilon$.
				By the inductive hypothesis we can compute $s$ such that $\leb (G_m \setminus G_{m,s}) < \eps$.
				Let $N$ be the number of cubes in $G_{m,s}$. Denote these cubes $C_0, \ldots, C_{N-1}$.

				Since the quantities $\lambda(V_m)$ are computable uniformly in $m\in\NN$, we may compute $t \ge s$ such that for all
				$i < N$, we have
				\[
					\lambda(V_{r_i} \setminus V_{r_i,t}) < \frac\epsilon{2N},
				\]
				where $V_{r_i}$ is the set selected on behalf of the cube $C_i$ in the construction of $G_{m+1}$.
				By construction, for each $t \ge s$ we have
				\[
					\textstyle
					G_{m+1,t} \cap G_{m,s} = \bigcup_{i=0}^{N-1} (V_{r_i,t} \cap C_i ).
				\]
				Then, since $G_{m,s}= \bigcup_{i=0}^{N-1} C_i$,
				\[
					\lambda \bigl((G_{m+1} \setminus G_{m+1,t}) \cap G_{m,s} \bigr) < \epsilon/2,
				\]
				because every cube enumerated into $G_{m}$ after stage $s$ is disjoint from $G_{m,s}$ by construction.
				Recall that by choice of $s$ we have $\leb (G_m - G_{m,s} ) < \eps$. Therefore
				\bc
				$ \lambda(G_{m+1} \setminus G_{m+1,t})
				\le \eps + \lambda\bigl((G_{m+1} \setminus G_{m+1,t}) \cap G_{m,s}\bigr)
				\le 2 \epsilon$, \ec
				as desired.

				\begin{claim}
					The function $g$ defined in (\ref{eqn:g_function_def}) is $L_1$-computable.
				\end{claim}

				\n
				By Claim~\ref{cl:Gm_comp_measure} and Fact~\ref{fa:1G}, the function $1_{G_i}$ is $L_1$-computable uniformly in~$i$.
				Thus, the function $h_k= \sum_{i=0}^m (-1)^i1_{G_i}$ is also $L_1$-computable uniformly in~$m$.
				It now suffices to show that given a rational $\eps>0$, we can compute $k$ such that the $1$-norm of the rest of the sum is less than $\eps$.

				For each $r \in \NN$ we have
				\bc
					$\vectornorm {\sum_{m=r}^\infty (-1)^m 1_{G_m}}_1 \le \sum_{m=r}^\infty \vectornorm{1_{G_m}}_1 \le \sum_{m=r}^\infty (\leb G_m) \le \tp{-r+1}$.
				\ec
				This shows the claim.
				\begin{claim}
					Let $C_m$ be the dyadic cube enumerated into $G_m$ such that $z \in C_m$. Then the sequence $(C_m)\sN m$ is as required in the lemma.
				\end{claim}
				\n 	First we show that
				\begin{equation} \label{eqn:lim_upper_1} \lim_{m \, \text{even}, m \to \infty} \frac{\int_{C_m} g}{ \leb C_m } =1. \end{equation}
					For $i \le m$ we have $C_m \sub G_i$. Hence
					\[
						\frac{\int_{C_m} \sum_{i=0}^m (-1)^i 1_{G_i}(x)}{ \leb C_m } =1.
					\]
					Now consider $i > m$. Note that by the choice of $r$ in (\ref{eqn:smaller_cubes}) we have $\leb (G_i\cap C_m) \le \tp{-(i-m)m} \leb C_m$.
					Therefore \bc $|\int_{C_m} \sum_{i=m+1}^\infty (-1)^i 1_{G_i}(x)| \le
				\leb C_m \sum _{i=m+1}^\infty \tp{-(i-m)m} \le \tp{-m+1} \leb C_m$. \ec
				This yields (\ref{eqn:lim_upper_1}).
				In a similar way one shows that $\lim_{m \, \text{odd}, m \to \infty} \frac{\int_{C_m} g}{ \leb C_m } =-1$.
				This establishes the claim, and the theorem.
			\end{proof}

			\n
					By Fact \ref{referee}, the function $g$ constructed above is actually $L_p$-computable for each computable real $p\ge 1$. 	For $p > 1$, recall the $p$-variation norm and the Riesz classes $A_p[0,1]$ from Subsection~\ref{ss:variation_norm}.
			Let $A_1[0,1]= AC_0[0,1]$ be the space of absolutely continuous functions.
			\begin{cor}
				Let $p \ge 1$ be a computable real. The following are equivalent for a real $z \in [0,1]$:
				\bi
					\item[(i)]   $z$ is Schnorr random.
					\item[(ii)]  Every function in $A_p[0,1]$ that is computable in the $p$-variation norm is differentiable at $z$.
					\item[(iii)] Every Lipschitz function $f$ that is computable in the $p$-variation norm is differentiable at $z$.
				\ei
			\end{cor}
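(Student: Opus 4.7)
The plan is to reduce everything to the weak-Lebesgue-point characterization of Schnorr randomness via the computable Riesz isometry of Subsection~\ref{ss:variation_norm}: for each computable $p\ge 1$, the map $g\mapsto \lambda x.\int_0^x g$ is a computable Banach space isometry from $(\mathcal L_p[0,1],\|\cdot\|_p)$ onto $(A_p[0,1],\|\cdot\|_{V_p})$, with inverse the derivative. Consequently, $f\in A_p[0,1]$ is computable in the $p$-variation norm if and only if $f'$ is $L_p$-computable. Through this identification, each differentiability question for $f$ at $z$ translates into a weak-Lebesgue-point question for $f'$ at $z$, where Theorems~\ref{Pathak.Rojas.eaR} and \ref{thm:Lp_Schnorr} apply. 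I will also use the bridge that, for any absolutely continuous $f$ and interval $Q=[a,b]\ni z$, we have $|Q|^{-1}\int_Q f' = (f(b)-f(a))/(b-a)$, so $z$ is a weak Lebesgue point of $f'$ if and only if the slopes $S_f(a,b)$ admit a common limit as $b-a\to 0$, if and only if $f$ is differentiable at $z$.

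For (i)$\Rightarrow$(ii), I will assume $z$ is Schnorr random and $f\in A_p[0,1]$ is computable in the $p$-variation norm, so $f'$ is $L_p$-computable and hence $L_1$-computable (since $\|h\|_1\le\|h\|_p$ on $[0,1]$ by H\"older). The $\Rightarrow$ direction of Theorem~\ref{Pathak.Rojas.eaR} then gives that $z$ is a weak Lebesgue point of $f'$, which by the bridge above means $f$ is differentiable at $z$. The implication (ii)$\Rightarrow$(iii) is a short reduction: for a Lipschitz $f$ with constant $c$ that is computable in the $p$-variation norm, $f(0)$ is computable from the $V_p$-approximations, and $\tilde f(x):=f(x)-f(0)$ lies in $A_p[0,1]$, because
\[
\frac{|\tilde f(t_{i+1})-\tilde f(t_i)|^p}{|t_{i+1}-t_i|^{p-1}}\le c^p|t_{i+1}-t_i|
\]
forces $V_p(\tilde f,[0,1])\le c^p$; the shift preserves $p$-variation-norm computability, so applying (ii) to $\tilde f$ delivers (iii).

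The substantive direction is (iii)$\Rightarrow$(i). Supposing $z$ is not Schnorr random, I will feed $z$ into Theorem~\ref{thm:Lp_Schnorr} to obtain a bounded $L_1$-computable $g\colon [0,1]\to\R$ with $z$ not a weak Lebesgue point of $g$, then invoke Fact~\ref{referee} to upgrade $g$ to $L_p$-computability for the given $p$. Setting $f(x):=\int_0^x g$, the Riesz isometry guarantees that $f$ is computable in the $p$-variation norm; moreover $f$ is Lipschitz with constant $\|g\|_\infty$ and satisfies $f(0)=0$. By the bridge in the first paragraph, the failure of $z$ to be a weak Lebesgue point of $g=f'$ forces $f$ not to be differentiable at $z$, giving the required witness. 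The main (mild) obstacle is ensuring that the witness $g$ from Theorem~\ref{thm:Lp_Schnorr} is not merely $L_1$-computable but $L_p$-computable for the given $p$; this is exactly what Fact~\ref{referee} provides, and without it the Riesz isometry would not produce a function computable in the $p$-variation norm for $p>1$. All remaining steps---the weak-Lebesgue-point versus differentiability equivalence under absolute continuity, and the Lipschitz-into-$A_p$ estimate---are routine.
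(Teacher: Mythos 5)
Your proof is correct and follows essentially the same route as the paper: reduce via the Riesz isometry $(\mathcal L_p[0,1],\|\cdot\|_p)\to(A_p[0,1],\|\cdot\|_{V_p})$, invoke the weak-Lebesgue-point characterization of Schnorr randomness for (i)$\Rightarrow$(ii), and for (iii)$\Rightarrow$(i) push the bounded $g$ from Theorem~\ref{thm:Lp_Schnorr} through Fact~\ref{referee} and the isometry. You fill in two points the paper treats tersely: the differentiability-versus-weak-Lebesgue-point bridge for absolutely continuous $f$, and (for (ii)$\Rightarrow$(iii)) the shift $\tilde f = f - f(0)$ to land in $A_p[0,1]$ when $f(0)\neq 0$ -- the latter is a mild precision improvement over the paper's one-line remark.
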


			\begin{proof}
				(i)$\to$(ii) follows by Theorem~\ref{thm:Lp_Schnorr} and the computable isometry $\+ L_p[0,1] \to A_p[0,1]$ in Subsection~\ref{ss:variation_norm}.
				For (ii)$\to$(iii) it suffices to note that every Lipschitz function is in $A_p[0,1]$.
				Finally, for (iii)$\to$(i), suppose that $z$ is not Schnorr random. Note that by  Fact \ref{referee}, the bounded function $g$ built in the proof of Theorem~\ref{thm:Lp_Schnorr} is $L_p$ computable. The image of this   function  under  the same isometry is Lipschitz and not differentiable at $z$.
			\end{proof}

	\section{Discussion and open problems}
		We have seen that the study of effective Lipschitz functions $f$ is intimately connected to the study of left-r.e.\ bounded martingales and computable (signed) martingales.

		Nondifferentiability of $f$ at a real $z$ corresponds is the conceptual analogue of
		oscillation of the martingale $M(\sss) = S_f(0.\sss, 0.\sss + \tp{-\sssl})$ on the binary expansion of $z$; that is,
		for some $\alpha < \beta$, we have infinitely many initial segments where the value is less than $ \alpha$, and infinitely many where the value is greater than $\beta$.
		We make some points regarding the connection between non-differentiability and oscillation.

		\vsps

		\n 1. It can happen that $f'(z)$ fails to exist even if $M(Z)$ does not oscillate,
		because the martingale only looks at the slope for basic dyadic intervals $[0.\sss, 0.\sss + \tp{-\sssl}]$ containing~$z$,
		while for differentiability we need to consider arbitrary small intervals containing~$z$.
		For instance, following \cite[Section 4]{Brattka.Miller.ea:nd},
		the nondecreasing Lipschitz function $f_0(x)= x \sin (2 \pi \log_2 |x|) +10x$ satisfies $f(x)=10x$ for each $x$ of the form $\pm \tp{-n}$, but $9= \ul Df(0) < \ol Df(0)=11$.
		Let $f$ be the right-shift by $1/2$ of $f_0$. Then $f$ is as required for $z=1/2$.

		\vsps

		\n 2. It is easy to show that if a bit sequence $Z$ is not \ML\ random, then some unbounded left-r.e.\ martingale $M$ oscillates on $Z$:
		take a left-r.e.\ martingale $L$ that succeeds on $Z$.
		Each time $M$ has passed $2$, it ensures the capital decreases to $1$ upon the next bit $1$.
		Since $Z$ has infinitely many $1$'s, $M$ will oscillate between $1$ and values of at least~$2$.

		\vsps

		\n 3.
		At the end of Subsection~\ref{ss:interval-r.e.} we gave an example of an interval-r.e.\ Lipschitz function $f_S$ that is not differentiable at $\Om$. For another example,
		let $P\sub [0,1]$ be an effectively closed class such that $v= \min P$ is \ML\ random, and define an interval-r.e.\ function with Lipschitz constant~$1$ by
		$f(x) = \leb ([0,x]\setminus P)$;
		then it is easy to see that the corresponding left-r.e.\ martingale oscillates on the binary expansion of~$v$, using that $v$ is Borel normal.

		\vsps

		\n 4. The work~\cite{Bienvenu.Greenberg.ea:preprint} shows that a randomness notion of a real $z$ slightly stronger than \ML's,
		called by the authors ``Oberwolfach randomness'', suffices to ensure that each interval-r.e.\ function (not necessarily Lipschitz) is differentiable at~$z$.
		
			\vsps
		
\n 5. A ML-random  real $z$ is called density random if each effectively closed class $\+ P \sub [0,1] $ with $z \in \+ P$ has Lebesgue density $1$ at $z$. Andrews, Cai, Diamondstone, Lempp and Miller  in unpublished work (2012) have shown that this randomness notion is equivalent to    non-oscillation of left-r.e.\ martingales (see \cite{LogicBlog:13}).   Nies  \cite{Nies:STACS}  has shown that $z$ is density random if and only if  all  interval-r.e.\  functions are differentiable at $z$. 
 
	Oberwolfach randomness implies density randomness as shown in \cite{Bienvenu.Greenberg.ea:preprint}.  It is  unknown whether the converse holds. 	In our Theorem~\ref{thm:left-r.e._Lipschitz_variation}
		we represented any non-decreasing interval-r.e.\ Lipschitz function $f$ mapping $0$ to $0$ as the variation $V_g$ of a computable Lipschitz function.
		Even though there is no direct connection between differentiability of $g$ and of $f$ at a real $z$, this result may be helpful in resolving the question.

		\vsps

		An interesting  question is whether an effective version of Rade\-macher's theorem holds:
		can we extend Theorem~\ref{thm:comprd_Lipschitz} to higher dimensions? This would mean that
		\begin{itemize}
		\item[] $z\in [0,1]^n$ is computably random $\LR$
		each computable Lipschitz function $f \colon \, [0,1]^n \ria \mathbb R$ is differentiable at~$z$.
		\end{itemize}
		We conjecture that the answer is yes. By work of Galicki, Nies and Turetsky available at \cite{LogicBlog:11},
		weak 2-randomness of $z$ ensures differentiability at~$z$ of each computable a.e.\ differentiable function.

	\section*{Acknowledgments}
		This material is based upon work supported by
			the National Science Foundation of the USA under Grants No.\ 0652669 and 0901020,
			the Marsden fund of New Zealand under Grant No.\ 08-UOA-187,
			a grant from the John Templeton Foundation, and
			NUS Grant R252-000-420-112.
			The opinions expressed in this publication are those of the authors and do not necessarily reflect the views of the John Templeton Foundation.
		Part of this work was done while F.~Stephan was invited to the University of Auckland in February 2012.

		We would like to thank Jason Rute for very helpful comments and corrections,
		and for providing the argument leading to Theorem~\ref{thm:Rute-extension}.
We would also like to thank the anonymous referees for numerous useful suggestions,
and for providing Fact~\ref{referee} and its proof.
 
	\bibliographystyle{amsnomr}
	\newcommand{\etalchar}[1]{$^{#1}$}
	\providecommand{\bysame}{\leavevmode\hbox to3em{\hrulefill}\thinspace}
	\providecommand{\MR}{\relax\ifhmode\unskip\space\fi MR }
	\providecommand{\MRhref}[2]{%
	\href{http://www.ams.org/mathscinet-getitem?mr=#1}{#2}
	}
	\providecommand{\href}[2]{#2}

\end{document}